\date{Rev. 22/XI/13 JM}
\title{Curves in $\R^d$ intersecting every hyperplane
at most $d+1$ times%\thanks{This paper has also been submitted
%to \emph{J.~European Math. Soc.}, with no decision as of November~22.}
}
\newif\ifcmts
\newcommand{\cmt}[1]{\ifhmode\newline\fi{\sf *** \ \ #1 \\}}
\author{
{\sc Imre B\'ar\'any}\\
  {\footnotesize R\'enyi Institute of Mathematics}\\[-1.5mm]
  {\footnotesize Hungarian Academy of Sciences}\\[-1.5mm]
  {\footnotesize POBox 127, 1364 Budapest, Hungary, %}\\[-1.5mm]
   and}\\%[-1.5mm]
  {\footnotesize Department of Mathematics}\\[-1.5mm]
  {\footnotesize University College London}\\[-1.5mm]
  {\footnotesize Gower Street, London WC1E 6BT, England}
\\[-1.5mm]   {\footnotesize e-mail: {\tt barany@math-inst.hu}}
                        \and
{\sc Ji\v{r}\'{\i} Matou\v{s}ek}%\thanks{Supported
%by the  ERC Advanced Grant No.~267165.}
% is gratefully acknowledged.}
\\
   {\footnotesize Department of Applied Mathematics}\\[-1.5mm]
   {\footnotesize  Charles University, Malostransk\'{e} n\'{a}m. 25}\\[-1.5mm]
{\footnotesize  118~00~~Praha~1,
  Czech Republic, and}\\%[-1.5mm]
{\footnotesize    Institute of  Theoretical Computer Science}\\[-1.5mm]
{\footnotesize    ETH Zurich,
      8092 Zurich, Switzerland}
\\[-1.5mm]   {\footnotesize e-mail: {\tt matousek@kam.mff.cuni.cz}}
\and
{\sc Attila P\'or}\\[-1.5mm]
{\footnotesize Department of Mathematics}\\[-1.5mm]
{\footnotesize Western Kentucky University}\\[-1.5mm]
{\footnotesize  1906 College Heights Blvd. \#11078}\\[-1.5mm]
{\footnotesize   Bowling Green, KY 42101, USA}
\\[-1.5mm]   {\footnotesize e-mail: {\tt  attila.por@wku.edu}}
}
\newtheorem{theorem}{Theorem}[section]
\newtheorem{lemma}[theorem]{Lemma}
\newtheorem{claim}[theorem]{Claim}
\newtheorem{corollary}[theorem]{Corollary}
\newcommand{\heading}[1]{\vspace{1ex}\par\noindent{\bf\boldmath #1}}
\newcommand{\ProofEndBox}{{\ifhmode\unskip\nobreak\hfil\penalty50 \else
          \leavevmode\fi\quad\vadjust{}\nobreak\hfill$\Box$
            \finalhyphendemerits=0 \par}}
\newcommand{\R}{{\mathbb{R}}}
\newcommand\eps{\varepsilon}
\newcommand{\sgn}{\mathop {\rm sgn}\nolimits}
\newcommand{\aff}{\mathop {\rm aff}\nolimits}
\newcommand{\dist}{\mathop {\rm dist}\nolimits}
\numberwithin{equation}{section}
\providecommand\RR{\mathcal{R}}
\def\:{\colon}
\DeclareMathOperator{\OT}{OT}
\newcommand{\COT}{\OT^*}
\DeclareMathOperator{\twr}{twr}
\newcommand\crossi[1]{$({\le}\,#1)$-crossing}
\newcommand\al{\alpha}
\newcommand\de{\delta}
\newcommand\si{\sigma}
\long\def\onefigure#1#2{%  #1 picture,  #2  caption
\begin{figure*}[tbp]
\begin{center}
#1
\end{center}
\caption{#2}
\end{figure*}
}
\def\immediateFigure#1{%
\smallskip\begin{center}#1\end{center}\smallskip }
\newcommand{\labfig}[2]  % labeled figure
{\onefigure{\mbox{\includegraphics{#1}}}{\label{f:#1} #2} }
\newcommand{\labfigw}[3]  % labeled figure with prescribed width
{\onefigure{\mbox{\includegraphics[width=#2]{#1}}}{\label{f:#1} #3}}
\newcommand{\immfig}[1]  % immediate figure
{\immediateFigure{\mbox{\includegraphics{#1}}}}
\newcommand{\immfigw}[2] % immediate figure with prescribed width
{\immediateFigure{\mbox{\includegraphics[width=#2]{#1}}}}
\newcommand{\marrow}{\marginpar{\boldmath$\longleftarrow$}}
\newcommand{\jirka}[1]{\ifhmode\newline\fi\marrow \textsf{*** (JIRKA: ) #1\newline}}
\newcommand{\imre}[1]{\ifhmode\newline\fi\marrow \textsf{*** (IMRE: ) #1\newline}}
\newcommand{\marrow}{}
\newcommand{\jirka}[1]{}
\newcommand{\imre}[1]{}
\begin{document}

\maketitle

\begin{abstract} By a curve in $\R^d$ we mean a continuous map
$\gamma\:I\to\R^d$, where $I\subset\R$ is a closed interval.
 We call a curve $\gamma$ in $\R^d$ {\crossi{k}}
if it intersects every hyperplane at most $k$ times
(counted with multiplicity). The {\crossi{d}} curves in $\R^d$
are often called \emph{convex curves} and they
form an important class; a primary example is the \emph{moment curve}
$\{(t,t^2,\ldots,t^d):t\in[0,1]\}$. They are also closely related
to \emph{Chebyshev systems}, which is a notion of considerable
importance, e.g., in  approximation theory. Our main result is that
for every $d$ there is $M=M(d)$ such that every {\crossi{d+1}} curve in $\R^d$
can be subdivided into at most $M$ {\crossi{d}} curve
segments. As a consequence, based on the work of Eli\'a\v{s}, Rold\'an, Safernov\'a,
and the second author, we obtain an essentially tight
lower bound for a geometric Ramsey-type problem in $\R^d$
concerning order-type homogeneous sequences of points, investigated
in several previous papers.
\end{abstract}

\section{Introduction}

The most intuitive statement of the problem investigated in this paper
involves curves in $\R^d$. By a curve we mean an arbitrary
continuous mapping $\gamma\:I\to\R^d$, where $I\subset\R$ is a closed interval
(we could admit an open interval as well, but this would add
unnecessary technical complications).
Let us say that a curve $\gamma$ in $\R^d$ is \emph{\crossi{k}}
if it intersects every hyperplane $h$ at most $k$
times.\footnote{For algebraic curves in the complex projective space,
the number of intersections with a
generic hyperplane is the \emph{degree}, but we prefer using
a different term, since we deal with much more general curves,
which are typically not algebraic.}
Here the intersections
are counted with multiplicity; that is, the condition of {\crossi{k}}
reads $|\{t\in I: \gamma(t)\in h\}|\le k$.

It will be useful to observe that a {\crossi k} curve is not
constant on any nonempty open interval, and its image contains no segment.

\heading{\boldmath {\crossi{d}} ($=$convex) curves. }
The {\crossi{d}} curves in $\R^d$ are called \emph{convex curves}
in a significant part of the literature (e.g.,
\cite{arnold-problems,z-tm-04,shapishapi,sedykh-shapi,musin-cheby}),
and they are of considerable interest
in several areas.
In the plane, a convex curve in this sense is a connected piece of the
boundary of a convex set.
A primary example  of a higher-dimensional convex curve
is the \emph{moment curve} $\{(t,t^2,\ldots,t^d):t\in[0,1]\}$. The convex
hull of $n\ge d+1$ points on a convex curve in $\R^d$
is  a \emph{cyclic
polytope}, one of the most important examples in the theory of convex
polytopes and in discrete geometry in general.

If we regard a convex curve $\gamma\:I\to\R^d$ as a $d$-tuple
$(\gamma_1,\ldots,\gamma_d)$ of functions $I\to\R$, and define
$\gamma_0\equiv 1$, then the $(d+1)$-tuple $(\gamma_0,\gamma_1,\ldots,
\gamma_d)$ (or possibly $(-\gamma_0,\gamma_1,\ldots,
\gamma_d)$) forms a \emph{Chebyshev system},\footnote{Let $A$ be a linearly ordered
set of at least $k+1$ elements. A (real) \emph{Chebyshev system} on $A$
is a system of continuous real functions
 $f_0,f_1,\ldots,f_k\:A\to\R$ such that for every choice of elements
$t_0<t_1<\cdots<t_k$ in $A$, the matrix $(f_i(t_j))_{i,j=0}^k$ has
a (strictly) positive determinant.} which is an important notion
in approximation theory, theory of finite moments, and other areas---see,
e.g., \cite{KarlinStudden,CPZ}. Conversely, every Chebyshev system
$(\gamma_0,\ldots,\gamma_d)$ on an interval $I$ with $\gamma_0\equiv 1$
(or more generally, $\gamma_0$ strictly monotone) gives rise to
a convex curve in~$\R^d$.

\heading{Subdividing \boldmath {\crossi{d+1}} curves. }
The following question is quite natural and interesting in its own right and it
has been motivated by the work \cite{EMRS} in geometric Ramsey theory,
as will be explained below. Given an integer $d\ge 2$, does there exist $M=M(d)$
such that every {\crossi{d+1}} curve $\gamma$ in $\R^d$ can be subdivided
into at most $M$ convex curves?  In more detail,
if $\gamma$ is a map $I\to\R^d$, we want to subdivide $I$ into
subintervals $I_1,\ldots,I_k$, $k\le M$, so that the restriction
of $\gamma$ to each $I_i$ is convex (i.e., {\crossi{d}}).
Our main result answers this question in the affirmative.

\begin{theorem}\label{t:main}
For every integer $d\ge 2$ there exists $M=M(d)$
such that every {\crossi{d+1}} curve $\gamma$ in $\R^d$ can be subdivided
into at most $M$ convex curves.
\end{theorem}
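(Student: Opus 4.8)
The plan is to argue by induction on the dimension $d$, using a carefully chosen ``tracking'' map that records the local behaviour of the curve relative to hyperplanes. The key object is the \emph{osculating flag}: at a generic point $\gamma(t)$ of a {\crossi{d+1}} curve one can associate (after a suitable local smoothing or by a combinatorial surrogate built from signs of determinants of nearby points) a flag of affine subspaces $\aff\{\gamma(t)\}\subset L_1(t)\subset\cdots\subset L_{d-1}(t)\subset\R^d$. The crucial structural fact to extract from the {\crossi{d+1}} hypothesis is that each ``level'' behaves like a convex curve \emph{except at a bounded number of places}: for instance, projecting $\gamma$ along a fixed direction into $\R^{d-1}$, or intersecting with a generic hyperplane, produces a curve whose number of ``bad'' parameter values (places where the {\crossi{d}} condition for the lower-dimensional object fails) is controlled by the global bound $d+1$. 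This is where I would first set up the bookkeeping: show that a {\crossi{d+1}} curve in $\R^d$, cut at a bounded number of points, decomposes into pieces each of which is ``monotone'' with respect to every fixed hyperplane in a strong enough sense.

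Concretely, I would proceed as follows. First, reduce to curves that are piecewise-analytic or at least sufficiently tame, by approximating $\gamma$ — one must check that the {\crossi{d+1}} property and the conclusion are stable enough under the approximation, or instead work purely combinatorially with order types of finite point samples, which is the language of \cite{EMRS} and avoids analytic pathologies. Second, for such a tame curve, identify the finite set $B\subset I$ of ``inflection-type'' parameters: the $t$ where the osculating hyperplane is stationary, or equivalently where a certain Wronskian-like determinant vanishes; the {\crossi{d+1}} bound forces $|B|$ to be at most some explicit function of $d$ (this is the heart of the argument and uses that a hyperplane through $d$ consecutive osculating conditions plus one extra crossing would give $d+1$ tangential-type intersections, contradicting the count once multiplicities are handled correctly). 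Third, on each of the $O_d(1)$ subintervals of $I\setminus B$, show directly that $\gamma$ is {\crossi d}: on such a subinterval the sign of the relevant $(d+1)\times(d+1)$ determinant $\det(1,\gamma(t_0),\ldots)$ is constant for $t_0<\cdots<t_d$, which is exactly the Chebyshev-system characterisation of convexity recalled in the introduction. Finally, combine: $M(d)$ can be taken to be (number of pieces from the reduction) $\times$ ($1+$ the bound on $|B|$), giving an explicit, though presumably far from optimal, value.

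The main obstacle I anticipate is Step 2 — bounding the number of bad parameters $|B|$ — together with the bookkeeping of multiplicities. The subtlety is that ``intersects a hyperplane $k$ times with multiplicity'' for a merely continuous curve is a delicate notion (a tangency, a point where the curve touches and comes back, a flat stretch — these must be assigned the right weights), and an inflection of the osculating hyperplane may or may not ``cost'' a crossing depending on parity of the order of contact. I would handle this by passing to finite samples: fix $t_0<\cdots<t_{d+1}$ and track the sign pattern of the $d+2$ determinants obtained by deleting one point; a sign change in the full alternating pattern corresponds to a hyperplane crossing, and the {\crossi{d+1}} hypothesis caps the total number of such sign changes along $I$. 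Translating this discrete count back into a bound on the number of convex pieces is then a Ramsey-free, purely combinatorial argument about sequences of sign vectors, which should be robust. A secondary difficulty is making the induction on $d$ actually close — the lower-dimensional curves produced by projection are {\crossi{d}} rather than {\crossi{(d-1)+1}} in general, so one may need instead a direct (non-inductive) argument via the osculating flag, or an induction on a more refined parameter than $d$ alone.
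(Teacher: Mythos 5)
Your instinct to pass to finite samples and argue combinatorially about sign patterns of determinants is exactly the right move, and it matches the paper's overall strategy (a limit argument in Section~6 reduces the curve statement to the polygonal-path version, Theorem~\ref{t:pmain}, which is then handled purely combinatorially). However, your sketch stops short of the two ideas that actually make the argument work, and the places where it is vague are precisely the places where it would otherwise fail.

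First, the key structural fact you need is not the bound on a set $B$ of ``inflection-type'' parameters that you gesture at in Step~2; it is a much cleaner statement, Lemma~\ref{l:flip}: for a {\crossi{d+1}} polygonal path and \emph{any} fixed $d$-element subset $R$ of the vertices, the sequence of signs $\sgn(\{p_i\}\cup R)$ (as $p_i$ runs over the remaining vertices) has \emph{at most one sign change}. This is a global statement about every $d$-subset simultaneously, not a bound on a finite exceptional set, and it is what encodes the {\crossi{d+1}} hypothesis. Your proposed route via osculating flags/Wronskians does not obviously yield anything of this strength, and the multiplicity bookkeeping you correctly worry about is sidestepped in the paper precisely because one perturbs $R$ to a nearby set $Q$ spanning a hyperplane $H^*$ avoiding all other vertices (Claims~\ref{cl:dist} and~\ref{cl:empty}), and then literally counts crossings of the path with $H^*$.

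Second, and more fatally, you correctly diagnose that an induction on the dimension $d$ via projection does not close, because projecting a {\crossi{d+1}} curve to $\R^{d-1}$ gives a {\crossi{d+1}} curve there, not a {\crossi{d}} one. You propose no way around this, only that ``one may need instead a direct (non-inductive) argument.'' The paper's resolution is to abstract away the geometry entirely: it defines \emph{flip $k$-sequences} (sequences of abstract elements with $\pm1$ signs on $(k+1)$-subsets satisfying the one-sign-change property) and proves by induction on $k$ that the greedy partition of any such sequence has at most $c(k)$ blocks (Theorem~\ref{t:flip}). The inductive step is not a geometric projection at all: one deletes the last element $a_n$ and declares $\sgn A := \sgn(A\cup\{a_n\})$ for $k$-subsets $A$, obtaining a flip $(k-1)$-sequence, then finds a long monochromatic block $B$ and uses the reduced form (Lemma~\ref{l:short}) to locate a sign flip inside $B$ that, together with $a_n$, produces two sign changes for some $k$-subset — a contradiction. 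Nothing in your sketch supplies a substitute for this mechanism, so as written the argument has a genuine hole at its center: you have identified the right kind of object to count (sign changes) but neither the correct invariant (one sign change per $d$-subset) nor any inductive scheme that terminates.

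Finally, a small point on the reduction to curves: your suggestion to ``approximate by piecewise-analytic curves'' is riskier than necessary, since it is not clear that the {\crossi{d+1}} property survives analytic approximation. The paper instead takes $\eps$-samples of $\gamma$ in general position, applies Theorem~\ref{t:pmain} to each, and passes to a cluster point of the resulting subdivisions as $\eps\to 0$ (Lemma~\ref{l:sample}); a separate perturbation argument shows that if $\gamma$ itself met some hyperplane $d+1$ times, one could find a hyperplane with $d+1$ \emph{generic} crossings, which a fine enough sample would detect. This is cleaner and avoids any tameness assumption on $\gamma$.
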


We note that the value $d+1$ is important, since a  {\crossi{d+2}}
curve in $\R^d$ in general cannot be subdivided into a bounded number
of convex curves. An example for $d=2$ can be obtained, e.g.,
by starting with a circular arc and making many very small and flat inward
dents in it.

%\imre{MENTION HERE a corollary saying that this example is {\crossi{4}}yet it is not the projection of a
%{\crossi{k}} curve for very high $k$??}
%\jirka{Not sure, since the question this answers doesn't seem
%to be so obvious \ldots I don't know.}

The case $d=2$ is already nontrivial, but to our surprise,
we haven't found it mentioned in the literature. The following
picture shows a planar curve, namely, the graph of
$x(1-x^2)^2$ on $[-1,1]$, which can be checked to be  {\crossi 3},
but obviously cannot be subdivided into fewer than $4$ convex arcs:
\immfigw{fourarcs2}{5cm}
Hence $M(2)\ge 4$. We can prove that $M(2)$ actually equals $4$, and that $M(3) \le 22$.
The proofs can be found in
an earlier version of this paper \cite{curve-bm} by the first two authors.

\heading{Theorem~\ref{t:main} for polygonal paths.} For technical reasons,
and also from the point of view of our motivation in
geometric Ramsey theory, it is more convenient to work with polygonal
paths. A \emph{polygonal path}
 is a curve made of finitely many straight segments;
we call these segments the  \emph{edges} of the polygonal path, and
their endpoints are the \emph{vertices}. For a point sequence
$(p_1,p_2,\ldots,p_n)$, we write $p_1p_2\cdots p_n$ for the polygonal
path consisting of the segments $p_1p_2$,\ldots, $p_{n-1}p_n$.

The definition of \crossi{k}\  needs to be modified: we call
a polygonal path $\pi$ \emph{\crossi{k}} if it intersects every
hyperplane in at most $k$ points, \emph{with the exception of
the hyperplanes that contain an edge of $\pi$}.
Moreover, we will also consider only \emph{polygonal paths in general position},
meaning that every $k\le d+1$ vertices of the polygonal path
are affinely independent. The polygonal path version of Theorem~\ref{t:main}
says the following.

\begin{theorem}\label{t:pmain}
For every integer $d\ge 2$ there exists $M=M(d)$
such that every {\crossi{d+1}} polygonal path $\pi$ in $\R^d$ can be subdivided
into at most $M$ convex (i.e., {\crossi{d}}) polygonal paths.
\end{theorem}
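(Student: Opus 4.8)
The plan is to pass to a purely combinatorial problem about the order type of the vertex sequence. Since $\pi=p_1p_2\cdots p_n$ is in general position, every increasing $(d+1)$-tuple of indices $i_0<i_1<\cdots<i_d$ carries a well-defined orientation $\chi(i_0,\dots,i_d)=\sgn\det(p_{i_1}-p_{i_0},\dots,p_{i_d}-p_{i_0})\in\{+1,-1\}$, and a subpath $p_ap_{a+1}\cdots p_b$ is convex (that is, {\crossi d}) exactly when $\chi$ takes the same value on all increasing $(d+1)$-tuples drawn from $\{a,\dots,b\}$; I would establish this characterization of convex polygonal paths first. The goal then becomes: cut $\{1,\dots,n\}$ into at most $M=M(d)$ intervals on each of which $\chi$ is constant, i.e.\ bound the number of ``breakpoints'' where the local order type is forced to change.

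The structural consequence of the {\crossi{d+1}} hypothesis that I would lean on is a ``few sign changes'' principle. Pick any $d$ vertices $p_{i_1},\dots,p_{i_d}$ with pairwise non-consecutive indices. By general position (any $d+1$ or fewer vertices are affinely independent) they span a hyperplane $h$ containing no edge of $\pi$ and no other vertex of $\pi$, so $\pi$ meets $h$ in at most $d+1$ points and the side function $j\mapsto\operatorname{side}_h(p_j)\in\{+1,-1\}$ is defined for every $j\notin\{i_1,\dots,i_d\}$. Since $\pi$ already passes through $p_{i_1},\dots,p_{i_d}$ it meets $h$ at most once more, so this $\pm1$-sequence changes sign at most $d+1$ times. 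Up to the sign coming from antisymmetry of the determinant, $\operatorname{side}_h(p_j)$ is $\chi$ of the tuple obtained by inserting $j$ among $i_1,\dots,i_d$; hence every choice of ``probe vertices'' bounds how often $\chi$ may flip as its remaining argument sweeps through $\{1,\dots,n\}$. Allowing the probe vertices to vary --- and, where needed, rotating $h$ about a $(d-2)$-flat spanned by $d-1$ of them, each passage through a further vertex of $\pi$ being a discrete event --- turns this into local control of the order type around every index.

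From these ingredients I would run a recursion on the dimension (with the planar case supplied separately, cf.~\cite{curve-bm}) coupled with a Ramsey-type extraction: when $n$ is large compared with the target bound, pigeonhole on the side-sequences of many probe hyperplanes to locate a long subpath $p_a\cdots p_b$ that is already convex and in sufficiently general position, then use the crossing bound with hyperplanes through $d$ vertices of this long convex middle part to show that the portions of $\pi$ before $p_a$ and after $p_b$ again have bounded order-type complexity, and recurse on them. The bound $M(d)$ obtained this way is explicit but far from optimal; already for $d=2$ it is much weaker than the exact value $M(2)=4$.

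The main obstacle is precisely the passage from local to global. The ``dent'' construction (many tiny flat dents on a circular arc) shows that a curve all of whose short subpaths are convex may still need unboundedly many convex pieces, so the {\crossi{d+1}} hypothesis has to be used globally, never on bounded windows alone; the delicate part is to rule out slowly accumulating non-convexity --- spiral-like or staircase-like order types --- uniformly in $n$, which is exactly what the probe-hyperplane argument must deliver. Two technical points demand care: at a vertex the path may touch $h$ without crossing, so ``number of intersection points'' and ``number of sign changes'' must be carefully reconciled, and while the probe vertices move $h$ passes through near-degenerate positions where several combinatorial types compete. Making the rotation argument rigorous there, and threading it through the recursion on $d$, is where most of the work lies.
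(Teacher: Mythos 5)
You correctly identify the right framework: translate convexity of a subpath into constancy of the orientation function $\chi$ on $(d+1)$-tuples (the paper's Lemma~2.1), and observe that a hyperplane $h$ through $d$ probe vertices meets $\pi$ at most once more, which constrains the side/orientation sequence of the remaining vertices. But the proposal stops short of the precise statement that makes the combinatorics work, and the proposed global argument is not an argument yet.

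Concretely, two gaps. First, the quantity you bound --- the side sequence $j\mapsto\operatorname{side}_h(p_j)$, with ``at most $d+1$ sign changes'' --- is the wrong object. After accounting for the determinant's antisymmetry (each passage of $j$ through a probe index both flips the side and swaps a column), the \emph{orientation} sequence $j\mapsto\sgn(\{p_j\}\cup R)$ has \emph{at most one} sign change; this sharpening is the paper's Lemma~3.1 and is exactly what the downstream combinatorics needs (a bound of $d+1$ sign changes would not produce a contradiction). You allude to the translation via antisymmetry but never state or exploit the one-sign-change conclusion. Moreover you restrict to pairwise non-consecutive probe vertices; the paper must allow arbitrary $d$-subsets $R$ (including consecutive vertices and hyperplanes that do not separate neighbours), and handles these by perturbing the probe vertices along edges of $\pi$ (the $\RR_\delta$ machinery of Section~3). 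Your proposal silently avoids these configurations, yet any combinatorial extraction will generate them.

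Second, and more seriously, the extraction step --- ``find a long convex middle $p_a\cdots p_b$, show that the parts before $p_a$ and after $p_b$ have bounded order-type complexity, recurse'' --- is not a working recursion. Nothing in the sketch shows that the outer parts are bounded rather than merely shorter; as written it would give at best a bound growing with $n$. The paper instead abstracts the geometry away entirely: it defines ``flip $k$-sequences'' (sequences in which every $k$-subset has a sign sequence with at most one sign change) and proves, purely combinatorially, that the greedy partition of a flip $k$-sequence into maximal sign-homogeneous blocks has at most $c(k)$ blocks (Theorem~4.2). The induction is on $k$ and the key move is \emph{fixing the last element} $a_n$: the first $n-1$ elements, with signs $\sgn(A\cup\{a_n\})$, form a flip $(k-1)$-sequence, so pigeonholing produces one long block, inside which a local sign change together with $a_n$ yields two sign changes in a single sign sequence --- a contradiction. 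This dimension reduction is absent from your sketch, and the base case is the purely combinatorial $k=1$, not the geometric planar case you propose to import from \cite{curve-bm}. Without a concrete mechanism of this kind, the obstacle you yourself name --- ruling out slowly accumulating non-convexity uniformly in $n$ --- remains unresolved.
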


In Section~\ref{s:poly-c} we prove by a limit argument
that Theorem~\ref{t:pmain} implies Theorem~\ref{t:main}.

%\begin{lemma}\label{l:poly-c} Assuming that every {\crossi{d+1}}
%polygonal path in general position in $\R^d$ can be subdivided
%into at most $M$ {\crossi{d}} polygonal paths, every
%{\crossi{d+1}} curve in $\R^d$ can be subdivided into at most $M$
%{\crossi{d}} curves.
%\end{lemma}

\heading{Order-type homogeneous subsequences. } Now we come to the
geometric Ramsey-type problem motivating our work.

Let $T=(p_1,\ldots,p_{d+1})$ be an ordered $(d+1)$-tuple of points
in $\R^d$. We recall that the \emph{sign} (or \emph{orientation})
of $T$ is
defined as $\sgn\det X$, where the $j$th column of the $(d+1)\times (d+1)$
matrix $X$ is $(1,p_{j,1},p_{j,2},\ldots,p_{j,d})$, with $p_{j,i}$ denoting
the $i$th coordinate of $p_j$. Geometrically, the sign
is $+1$ if the $d$-tuple of vectors $p_1-p_{d+1},\ldots,p_{d}-p_{d+1}$
forms a positively oriented basis of $\R^d$, it is $-1$
if it forms a negatively oriented basis, and it is $0$
if these vectors are linearly dependent.

We call a sequence $(p_1,p_2,\ldots,p_n)$
of points in $\R^d$ in general position \emph{order-type homogeneous}
if all $(d+1)$-tuples $(p_{i_1},\ldots,p_{i_{d+1}})$,
$i_1<\cdots<i_{d+1}$, have the same sign (which is nonzero,
by the general position assumption).

Let $\OT_d(n)$ be the smallest $N$ such that every sequence of
$N$ points in general position in $\R^d$ contains an order-type homogeneous
subsequence of length $n$. The existence of $\OT_d(n)$ for all $d$ and $n$
follows immediately from Ramsey's theorem, but several recent papers
\cite{highes-aim,cfpss-semialg,Suk-OT,EMRS}
considered the order of magnitude of $\OT_d(n)$, for $d$ fixed
and $n$ large.

For $d=2$, the classical paper of Erd\H{o}s and Szekeres
\cite{es-cpg-35} implies that $\OT_2(n)=2^{\Theta(n)}$.\footnote{We
employ the usual asymptotic notation
for comparing functions: $f(n)=O(g(n))$ means that
$|f(n)|\le C|g(n)|$ for some $C$ and all $n$, where $C$ may depend
on parameters declared as constants (in our case on $d$);
$f(n)=\Omega(g(n))$ is equivalent to $g(n)=O(f(n))$;
and $f(n)=\Theta(g(n))$ means that both $f(n)=O(g(n))$
and $f(n)=\Omega(g(n))$.}
Suk \cite{Suk-OT}, improving on a somewhat weaker bound
by Conlon et al.~\cite{cfpss-semialg}, proved the upper bound
$\OT_d(n)\le\twr_d(O(n))$ for every fixed $d$, where
the tower function $\twr_k(x)$ is defined by $\twr_1(x) = x$
 and $\twr_{i+1} (x) = 2^{\twr_i (x)}$. He conjectured this
to be optimal, but so far matching lower bounds were known
only for $d=2$ (by \cite{es-cpg-35}) and $d=3$ \cite{highes-aim}.

By combining the results of \cite{EMRS} with Theorem~\ref{t:pmain},
we obtain a matching lower bound for all $d\ge 2$:

\begin{theorem}\label{t:OT4}
We have $\OT_d(n)\ge \twr_d(\Omega(n))$.
\end{theorem}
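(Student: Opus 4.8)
The plan is to quote the conditional lower bound of Eli\'a\v{s} et al.\ \cite{EMRS} and to feed Theorem~\ref{t:pmain} into it as the missing ingredient: the paper \cite{EMRS} reduces the lower bound in Theorem~\ref{t:OT4} to a curve-partition statement of exactly the form supplied by Theorem~\ref{t:pmain}, so that nothing new is needed once Theorem~\ref{t:pmain} is available. In outline, the construction in \cite{EMRS} builds, for each $d\ge 2$, a sequence of $N_d(n)$ points in general position in $\R^d$ with no order-type homogeneous subsequence of length $n$, by induction on $d$. The base case $d=2$ is the classical Erd\H{o}s--Szekeres ``cups and caps'' construction \cite{es-cpg-35}, which yields $N_2(n)=2^{\Omega(n)}$, i.e.\ $\OT_2(n)\ge 2^{\Omega(n)}=\twr_2(\Omega(n))$; one then wants to gain one more exponential with each increment of $d$, so that $N_d(n)\ge\twr_d(\Omega(n))$ after $d-1$ steps.

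The inductive step passes from a good sequence $Q$ of length $N$ in $\R^{d-1}$ to a good sequence of length $2^{\Omega(N)}$ in $\R^d$ by placing many rescaled and suitably positioned copies of $Q$ along a curve $\gamma$ in $\R^d$ and reading off the sign of a $(d+1)$-tuple of the resulting points from the pattern of copies the points lie in together with the signs recorded inside the copies---a higher-dimensional version of ``caps of cups''. For this to produce the desired exponential jump one cannot use a convex curve for $\gamma$, since on a convex curve every $(d+1)$-tuple taken in increasing order of the parameter has the same sign (the cyclic-polytope property), so the hosted construction would admit arbitrarily long homogeneous subsequences; what is needed is a {\crossi{d+1}} curve, whose $(d+1)$-tuple signs are no longer constant. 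Controlling those signs is exactly where Theorem~\ref{t:pmain} enters: since $\gamma$ decomposes into at most $M(d)$ convex arcs, on each arc the sign behaves as in the convex case, so it suffices to run the argument separately on the $M(d)$ arcs. As $M(d)$ is a constant (depending only on $d$), this costs only a bounded loss in the length and in the parameter $n$ at each of the $d-1$ levels, and all such losses are absorbed into the $\Omega(\cdot)$ in the final estimate; hence the per-level exponential gain survives and $\OT_d(n)\ge\twr_d(\Omega(n))$ follows.

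I expect essentially all the difficulty to lie not in this deduction but in Theorem~\ref{t:pmain} itself (equivalently Theorem~\ref{t:main}), whose proof occupies the rest of the paper; with Theorem~\ref{t:pmain} in hand, Theorem~\ref{t:OT4} is a black-box consequence of \cite{EMRS}. Finally, the bound is essentially best possible: it matches Suk's upper bound $\OT_d(n)\le\twr_d(O(n))$ \cite{Suk-OT}, so together these pin down the growth of $\OT_d(n)$ for every fixed $d\ge 2$ up to the constant factor inside the tower.
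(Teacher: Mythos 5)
Your high-level plan---black-box \cite{EMRS} plus Theorem~\ref{t:pmain}---is the right one, and the final conclusion is correct, but the mechanism you describe for how Theorem~\ref{t:pmain} enters is not what the paper does, and as written your argument has a genuine gap. You imagine that \cite{EMRS} constructs its lower-bound configuration by placing rescaled copies of a lower-dimensional example along a \crossi{d+1} curve $\gamma$, and that Theorem~\ref{t:pmain} is then needed to cut $\gamma$ into convex arcs and run the sign analysis arc by arc. Nothing in your proposal verifies that this is actually the structure of the \cite{EMRS} construction, nor that the arc-by-arc bookkeeping closes; the proof rests on unconfirmed internals of a paper you are otherwise treating as a black box. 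In particular, you never distinguish between the Ramsey quantity $\OT_d$ that Theorem~\ref{t:OT4} is about and the stronger quantity that \cite{EMRS} actually bounds, which is the crux of the matter.

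The paper's argument does not reopen \cite{EMRS} at all. It introduces a strengthening of order-type homogeneity: a sequence $P$ in $\R^d$ is \emph{super-order-type homogeneous} if for every $k\le d$ its projection to the first $k$ coordinates is order-type homogeneous, with $\COT_d(n)$ the corresponding Ramsey threshold. What \cite{EMRS} supplies, as a self-contained statement, is $\COT_d(n)\ge\twr_d(n-d)$. The only place Theorem~\ref{t:pmain} is used is in the new reduction $\OT_d(n)\ge\COT_d(\Omega(n))$: take an $N$-point sequence in super-general position with $N=\OT_d(n)$; it has an order-type homogeneous subsequence $P_1$ of length $n$; by Lemma~\ref{f:act} the corresponding polygonal path is convex, so its projection to the first $d-1$ coordinates is \crossi{d}; by Theorem~\ref{t:pmain} this projection splits into at most $M(d-1)$ pieces that are \crossi{d-1}, and the largest piece gives a subsequence $P_2$ of length $\ge n/M(d-1)$ whose projection to $\R^{d-1}$ is again order-type homogeneous. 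Iterating down through the dimensions yields a super-order-type homogeneous subsequence of length $\Omega(n)$, whence $\OT_d(n)\ge\COT_d(\Omega(n))\ge\twr_d(\Omega(n))$. You would need to supply this reduction (or an equivalent one)---the step where Theorem~\ref{t:pmain} genuinely does the work---rather than appealing to a conjectural picture of what the construction in \cite{EMRS} looks like.
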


The argument is given in Section~\ref{s:OT}.

\section{Order-type homogeneity and path convexity }

We need the following fact.

\begin{lemma}\label{f:act}
A sequence $P=(p_1,p_2,\ldots,p_n)$ in general position in $\R^d$
is order-type homogeneous iff the polygonal path
$\pi=p_1p_2\cdots p_n$ is convex.
\end{lemma}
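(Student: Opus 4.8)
The plan is to translate the combinatorial statement about signs of $(d+1)$-tuples into the analytic condition of intersecting every hyperplane at most $d$ times, proving both implications by unwinding definitions. First I would recall that a hyperplane $h$ in $\R^d$ can be written as $h=\{x:\scalp{a}{x}=b\}$ for some $a\in\R^d\setminus\{0\}$, $b\in\R$, and that for a point $p$ the sign of $\scalp{a}{p}-b$ equals, up to a fixed global sign depending only on $(a,b)$, the orientation $\sgn\det(p_1,\ldots,p_{d-1},p,*)$ when $p_1,\ldots,p_{d-1}$ are fixed points spanning (together with the normal direction) the hyperplane; more precisely I would use the cofactor expansion expressing the orientation sign of a $(d+1)$-tuple as an affine functional in its last point. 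The key observation is that a polygonal path $\pi=p_1\cdots p_n$ meets $h$ in an edge-interior point exactly when two consecutive vertices $p_i,p_{i+1}$ lie strictly on opposite sides of $h$, i.e.\ when the affine functional $f_h(x)=\scalp{a}{x}-b$ changes sign from $p_i$ to $p_{i+1}$ (by general position no vertex lies on $h$ unless $h$ contains an edge, which is the excluded case). Hence the number of times $\pi$ crosses $h$ equals the number of sign changes in the sequence $f_h(p_1),\ldots,f_h(p_n)$.

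For the forward direction, suppose $P$ is order-type homogeneous, say all $(d+1)$-tuples have sign $+1$; I must show every hyperplane $h$ (not containing an edge) is crossed at most $d$ times. Fix $h$ and suppose for contradiction it is crossed at least $d+1$ times. Then the sequence $f_h(p_1),\ldots,f_h(p_n)$ has at least $d+1$ sign changes, so I can pick indices $i_0<i_1<\cdots<i_{d+1}$ on which $f_h$ attains alternating signs. Now I would choose $d-1$ further points among the $p_j$ lying "close to $h$" — actually the cleaner route is: express $f_h$ itself as a linear combination of the $d+1$ coordinate functionals $x\mapsto\det(\ldots,x,\ldots)$ obtained by deleting one row; equivalently, apply Cramer-type reasoning to produce, from $d+1$ points with a prescribed alternating pattern of $f_h$-signs, a subset of $d+1$ points whose orientation has the wrong sign. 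The cleanest formulation: the function $t\mapsto\operatorname{sign}\det X(q_1,\ldots,q_d,p_t)$, with $q_1,\ldots,q_d$ spanning a hyperplane parallel to $h$, is (up to global sign) $\operatorname{sign} f_h(p_t)$; choosing the $q_i$ to be $d$ of the vertices with one fixed sign of $f_h$ and two vertices $p_a,p_b$ with opposite $f_h$-signs straddling one more such vertex yields two $(d+1)$-tuples with opposite orientations, contradicting homogeneity. The main obstacle here, and the step I would spend the most care on, is choosing these auxiliary $d-1$ (or $d$) points correctly from among the $p_j$ so that the resulting $(d+1)$-tuples are in the right index order and use only the alternation already guaranteed — general position guarantees all the relevant determinants are nonzero, but the bookkeeping of indices needs the alternation to supply $d+1$ indices after fixing the others, which is exactly why $d+1$ sign changes (not $d$) are needed.

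For the converse, suppose $\pi$ is convex, i.e.\ $(\le d)$-crossing; I must show $P$ is order-type homogeneous. Take any two $(d+1)$-tuples $(p_{i_1},\ldots,p_{i_{d+1}})$ and $(p_{j_1},\ldots,p_{j_{d+1}})$ with increasing indices; I want their orientations equal. It suffices, by transitivity, to handle the case where the two index sets differ in exactly one position, say we replace $p_{i_k}$ by an adjacent choice, and then to argue one can pass between any two $(d+1)$-subsets by such elementary steps. For a single swap, the two orientations have the same sign iff the affine functional $f$ vanishing on the common $d$ points does not change sign between the two differing vertices; if it did change sign, then since $f=f_h$ for the hyperplane $h$ through those $d$ common points, the path $\pi$ would cross $h$ at that spot; iterating, a violation of homogeneity over the whole sequence forces $\ge d+1$ sign changes of some $f_h$ along $p_1,\ldots,p_n$, hence $\ge d+1$ crossings, contradicting convexity. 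Here the only delicacy is checking that $h$ does not contain an edge of $\pi$ — which follows from general position, since $h$ is spanned by $d$ vertices and an edge would force $d+2$ vertices in a hyperplane — and that "close" crossings are genuine transversal crossings, again by general position. I would present the argument symmetrically for both implications by isolating a single lemma: \emph{for a hyperplane $h$ containing no edge of $\pi$, the number of crossings of $\pi$ with $h$ equals the number of sign changes of $(f_h(p_1),\ldots,f_h(p_n))$, and $f_h(p_t)$ has the same sign as the orientation of $(p_{a_1},\ldots,p_{a_d},p_t)$ for any $d$ vertices $p_{a_1},\ldots,p_{a_d}$ spanning $h$}; the theorem then follows by elementary sign-pattern combinatorics.
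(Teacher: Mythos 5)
Your overall plan — translate both conditions into sign patterns of the affine functional $f_h$ of a hyperplane $h$, and go back and forth — is reasonable, and the Cramer/Radon idea you gesture at for one direction could be made to work. But as written, the proposal rests on two false "general position" claims, and these false claims are doing real work; they hide exactly the steps that make this lemma non-trivial.

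First, you assert that ``by general position no vertex lies on $h$ unless $h$ contains an edge.'' This is not true: general position forbids $d+1$ vertices of $P$ on a common hyperplane, but any $d$ or fewer vertices may lie on $h$ without $h$ containing an edge. Consequently your reduction ``number of intersections of $\pi$ with $h$ $=$ number of sign changes of $(f_h(p_1),\ldots,f_h(p_n))$'' is wrong whenever some $p_i\in h$: a vertex lying on $h$ is an intersection point that contributes a zero, not a sign change, to the sequence, so $d+1$ intersections need not yield $d+1$ sign changes (or $d+2$ vertices of alternating $f_h$-sign, which is what your Cramer/Radon argument would need). In the direction ``order-type homogeneous $\Rightarrow$ convex'' this is a genuine gap: you would establish that $h$ is crossed in at most $d$ edge interiors, but not that $|\pi\cap h|\le d$, which is what the $({\le}\,d)$-crossing condition asks. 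The paper avoids this issue by a different device: it uses the vertex-replacement claim to deform $\pi$ into a polygonal path $\pi'$ whose vertex sequence is still order-type homogeneous and \emph{contains} $d$ of the intersection points of $\pi$ with $h$, and then reads off two $(d+1)$-tuples of opposite orientation from the edge of $\pi'$ that crosses $h$.

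Second, in the other direction you assert that the hyperplane $h$ spanned by the $d$ common points of two adjacent $(d+1)$-tuples ``does not contain an edge of $\pi$, which follows from general position, since an edge would force $d+2$ vertices in a hyperplane.'' This misses the case where the two endpoints of the offending edge are themselves among the $d$ spanning vertices: then $h$ contains only $d$ vertices, yet it does contain an edge, and general position is not violated. This happens whenever the $d$ common points include two consecutive elements of $P$, which is entirely possible. The paper handles exactly this by perturbing $h$ to a nearby $h'$ that avoids the edges while still separating the relevant $d+2$ points alternately; your argument has no such step, and it is needed. Also, a minor but related confusion: you conclude from a single sign change of $f_h$ between the two swapped vertices that there are ``$\ge d+1$ sign changes of $f_h$ along $p_1,\ldots,p_n$'' — in fact the other $d$ intersections come from the $d$ vertices lying \emph{on} $h$, which are zeros of $f_h$, not sign changes; the correct accounting is by intersection points, not sign changes, which again runs into the need to perturb $h$ off the edges.

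In short: the skeleton is workable, but you must (a) drop the false claim that general position keeps vertices off $h$, and instead either perturb $h$ (as the paper does for one direction) or carefully account for vertex intersections; and (b) include the perturbation of $h$ when the $d$ common points include consecutive vertices of $P$. Your Cramer/Radon idea for ``homogeneous $\Rightarrow$ convex'' (choose $d+2$ vertices with alternating $f_h$-signs, use the alternating-signs form of the Radon dependence to contradict $\sum_j\lambda_j f_h(p_{i_j})=0$) would give a route genuinely different from the paper's vertex-replacement argument, but it too needs the vertex-on-$h$ case addressed before the alternating pattern is guaranteed.
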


\begin{proof}
First we assume that $P$ is not order-type homogeneous. Then
it has two $(d+1)$-tuples, of the form $Q=(q_1,\ldots,q_{d+1})$
and $R=(r_1,\ldots,r_{d+1})$,
with opposite signs (both $Q$ and $R$ are subsequences of $P$,
i.e., the $q_i$ and the $r_j$ appear in $P$ in this order).

It is easy to check that we can also find $Q$ and $R$
with opposite signs that differ in a single point;
more precisely, there is an index $k$ such that
$q_i=r_i$ for all $i\ne k$. Indeed, given arbitrary $Q$ and
$R$ with opposite  signs, we can convert $Q$ into $R$ by a sequence
of moves, each of them changing a single element: we always move the
first element in which the current $Q$ differs from $R$ to the correct
position. Then at least one of the moves involves two $(d+1)$-tuples
with opposite signs.

Having $Q$ and $R$ as above with $q_i=r_i$ for all $i\ne k$,
we consider the hyperplane $h$ spanned by the points of $Q':=\{q_i:i\ne k\}$.
Then $q_k$ and $r_k$ lie on opposite sides
of $h$, and hence $\pi$ intersects $h$ between $q_k$ and $r_k$.
Together with the $d$ points $Q'$, we have
$d+1$ intersections of $\pi$ with $h$.

This $h$ may still contain edges of $\pi$, so we may need to move
it slightly. For simpler description, we think of $h$ as horizontal,
and say that $q_k$ is below $h$, $r_k$ is above $h$, and $q_k$ precedes
$r_k$ in $P$. Then, since $Q'$ is affinely independent, we can move $h$
by an arbitrarily small amount to a new position $h'$ so that the points
in the sequence $(q_1,q_2,\ldots,q_{k-1},q_k,r_k,q_{k+1},\ldots,q_{d+1})$
are alternatingly above and below $h'$. This implies that $\pi$
intersects $h'$ at least $d+1$ times, and since the move of $h$
was generic, we may assume that $h'$ contains no edges of~$\pi$.

For the reverse implication, we need the following claim:
\emph{If $P=(p_1,p_2,\ldots,p_n)$ is an order-type homogeneous
 sequence and $q$ is an interior point of the segment
$p_ip_{i+1}$, then the sequence $P'=(p_1,p_2,\ldots,p_i,q,p_{i+2},
\ldots,p_n)$ ($p_{i+1}$ replaced with $q$)
is order-type homogeneous as well.}

To verify this claim, we suppose w.l.o.g.\ that
all $(d+1)$-tuples of $P$ are positive,
and we consider an arbitrary $(d+1)$-tuple in $P'$
involving $q$, of the form
\[
T=(p_{j_1},\ldots,p_{j_{k-1}},q,p_{j_{k+1}},\ldots,p_{j_{d+1}}),\
1\le j_1<\cdots<j_{k-1}<i+1<j_{k+1}<\cdots<j_{d+1}\le n.
\]
We think of $q$ moving from $p_i$ to $p_{i+1}$ along the
segment $p_ip_{i+1}$.
The determinant whose sign defines the sign of $T$ is an
affine function of $q$ (considering the remaining points of $T$
fixed).  For $q=p_i$ it is either $0$ (if $j_{k-1}=i$)
or strictly positive, and for $q=p_{i+1}$
it is strictly positive. Therefore, for $q$ in between,
it is strictly positive too, which proves the claim.

Now  we assume for contradiction that the sequence $P=(p_1,\ldots,p_n)$
is order-type homogeneous, but the corresponding polygonal path $\pi$ is not
convex, and so it has at least $d+1$ intersections with
some hyperplane $h$ not containing an edge of $\pi$.
Let us fix intersections $q_1,q_2,\ldots,q_{d+1}$;
at least one of them, call it $q_\ell$, is an interior
point of an edge $p_jp_{j+1}$ of $\pi$ (since the $p_i$ are in
general position).

Using the claim above, we now want to replace $\pi$ by another
polygonal path $\pi'$,
whose vertex sequence is still order-type homogeneous and
includes all $q_i$ with $i\ne \ell$, as well as $p_j$ and $p_{j+1}$.
To this end, we first observe that no two $q_i$ share a segment
of $\pi$ (since $h$ contains no such segment).

When producing $\pi'$, first, if there is a $q_i$ with $i>\ell$
that is not a vertex of the current polygonal path, we take the
last such $q_i$. We replace the vertex of the current polygonal path
immediately following $q_i$ with $q_i$. By the claim, the new vertex sequence
is still order-type homogeneous. We repeat this step until
all $q_i$ with $i>\ell$ become vertices.

Then we proceed analogously with the $q_i$, $i<\ell$, that are
not vertices. This time we start with the smallest $i$,
and $q_i$ always replaces
the vertex immediately preceding it (and we
apply the claim to the reversal of the considered sequences).
Here is an illustration:
\immfig{qreplace}

In this way, we obtain the polygonal path $\pi'$ with order-type
homogeneous vertex sequence that is intersected by the hyperplane
$h$ in the $d$ vertices $q_i$, $i\ne\ell$, and in $q_\ell$,
which is an interior point of the segment $p_jp_{j+1}$
(neither $p_j$ nor $p_{j+1}$ have been replaced).
 But then the $(d+1)$-tuples
$(q_1,\ldots,q_{\ell-1},p_j,q_{\ell+1},\ldots,q_{d+1})$
and $(q_1,\ldots,q_{\ell-1},p_{j+1},q_{\ell+1},\ldots,q_{d+1})$
have opposite signs---a contradiction.
\end{proof}

\section{A combinatorial property of \boldmath{\crossi{d+1}} paths}

Here we prove a combinatorial property of point sequences
in $\R^d$ for which the corresponding polygonal path is
{\crossi{d+1}}. In the two subsequent sections we will
derive Theorem~\ref{t:pmain} from this property
in a purely combinatorial way.

Let $P=(p_1,\ldots,p_n)$ be a sequence  in general position in $\R^d$ and
let $\pi=p_1\cdots p_n$ be the corresponding polygonal path.
For notational convenience,
for $Q\subset P$ with $|Q|=d+1$, we define $\sgn Q$ as the sign of
the sequence $(p_{i_1},\ldots,p_{i_{d+1}})$,
 where $Q=\{p_{i_1},\ldots,p_{i_{d+1}}\}$
with $i_1<i_2<\ldots <i_{d+1}$. For a fixed subset $R\subset P$
with $|R|=d$, we consider the following sequence,
which we call the \emph{sign sequence of $R$}:
\begin{equation}\label{eq:flip}
\Bigl(\sgn(\{p_i\}\cup R):
i=1,2,\ldots,n,\ p_i\not\in R\Bigr)\in \{-1,+1\}^{n-d}.
\end{equation}

\begin{lemma}\label{l:flip} If $\pi$ is {\crossi{d+1}},
then for every $R$ as above, the sign sequence (\ref{eq:flip})
of $R$ has at most one sign change.
\end{lemma}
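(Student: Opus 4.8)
The plan is to argue by contradiction: suppose the sign sequence of some $d$-element set $R\subset P$ has at least two sign changes. Then I can pick five points among the $p_i\notin R$, say at positions $i_1<i_2<i_3<i_4<i_5$ in $P$ (here I actually only need the points realizing the pattern $+,-,+$ or $-,+,-$, so three witnesses of the alternation plus possibly the two points of $P$ flanking them, but let me keep it to the three witnesses $a,b,c$ with $\sgn(\{a\}\cup R)=\sgn(\{c\}\cup R)=-\sgn(\{b\}\cup R)$, appearing in this order in $P$). Let $h=\aff(R)$ be the hyperplane spanned by $R$. The $d$ points of $R$ already lie on $h$, so $\pi$ meets $h$ in (at least) $d$ points. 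I want to produce two more intersections of $\pi$ with a hyperplane $h'$ very close to $h$, for a total of $d+2$, contradicting that $\pi$ is {\crossi{d+1}}.

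The key geometric observation is that $\sgn(\{p_i\}\cup R)$ records on which side of $h$ the point $p_i$ lies. So as the parameter $t$ traverses $I$, the path $\pi(t)$ is on the $-$ side at $t=a$, crosses to the $+$ side, is on the $+$ side at $t=b$, crosses back, and is on the $-$ side at $t=c$: that already gives \emph{two} crossings of $h$ by the sub-path of $\pi$ strictly between $a$ and $c$ — crucially these are crossings in the \emph{interiors} of edges, and they are disjoint from $R$ and from each other provided I choose the witnesses so that $a,b,c$ do not share an edge (if two of them lie on a common edge of $\pi$, that edge is not contained in $h$ since $P$ is in general position and $R\cup\{a\}$ spans $h$; in that degenerate case the crossing count still works out, but I can also just refine the witness choice). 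Combined with the $d$ points of $R$ on $h$, this is $d+2$ intersections — except that $h$ may contain some edges of $\pi$, which is exactly the loophole in the definition of {\crossi{k}} for polygonal paths.

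So the last step, and the one requiring care, is to perturb $h$ to a generic hyperplane $h'$ that contains no edge of $\pi$ while keeping all $d+2$ intersections. This is the same maneuver used in the proof of Lemma \ref{f:act}: think of $h$ as horizontal; the points of $R$ lie on it, $\pi(a)$ and $\pi(c)$ are below, $\pi(b)$ is above. Since $R$ is affinely independent, I can tilt $h$ by an arbitrarily small amount so that the $d$ points of $R$ become alternately slightly above and slightly below the new hyperplane $h'$, while $\pi(a),\pi(c)$ stay strictly below and $\pi(b)$ stays strictly above. Ordering all $d+2$ points $\pi(a),\;R$-points interleaved appropriately$,\;\pi(b),\ldots,\pi(c)$ along the path, consecutive ones lie on opposite sides of $h'$, forcing at least $d+1$ crossings among them — wait, I need $d+2$: the honest accounting is that I keep the $d$ honest intersection points of $\pi$ with $h'$ coming from the tilted $R$ (each $R$-point is a transversal crossing since its two path-neighbors are on opposite sides after a generic tilt) plus the two crossings between $a$ and $b$ and between $b$ and $c$. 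Because the tilt is generic, $h'$ contains no edge of $\pi$. Hence $\pi$ meets the edge-free hyperplane $h'$ at least $d+2$ times, contradicting {\crossi{d+1}}. The main obstacle is verifying that the perturbation can be done \emph{simultaneously} — keeping the $R$-points as genuine transversal crossings and not destroying the two interior-edge crossings near $a$–$b$ and $b$–$c$ — but since all the relevant positional inequalities are strict (the $a,b,c$ witnesses are strictly off $h$, and "strictly off" is an open condition) a sufficiently small generic tilt works, exactly as in Lemma \ref{f:act}.
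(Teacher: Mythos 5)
The proposal rests on a false ``key geometric observation''. You claim that $\sgn(\{p_i\}\cup R)$ records on which side of $h=\aff R$ the point $p_i$ lies, and from this you infer that a $+,-,+$ pattern among witnesses $a,b,c\notin R$ forces $\pi$ to cross $h$ twice, transversally and away from $R$, strictly between $a$ and $c$. This is not what the sign encodes. By definition, $\sgn(\{p_i\}\cup R)$ is the sign of a $(d+1)\times(d+1)$ determinant in which the column of $p_i$ is placed according to the position of $p_i$ in the order of $P$. As $p_i$ ranges over $P\setminus R$, the column position changes each time we pass an element of $R$, and each such change flips the determinant by $-1$. So in fact $\sgn(\{p_i\}\cup R)=\eps\cdot(-1)^{c(i)}\cdot\sigma(p_i)$, where $\sigma$ is the side of $h$, $c(i)$ is the number of elements of $R$ preceding $p_i$ in $P$, and $\eps$ is a fixed sign. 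A concrete counterexample to your observation: with $d=2$, $R=\{p_3,p_5\}$, $a=p_1$, $b=p_4$, $c=p_7$, one has $c(a)=0$, $c(b)=1$, $c(c)=2$, so the sign pattern $+,-,+$ is attained with $\sigma(a)=\sigma(b)=\sigma(c)$, i.e.\ all three witnesses on the \emph{same} side of $h$; there need not be any crossing of $h$ in an edge interior between $a$ and $c$ at all (the path may merely touch $h$ at $p_3$ and $p_5$ and return). Hence the count of ``two extra crossings disjoint from $R$'' is unjustified and the ``$d+2$'' tally can fail.

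This is not a cosmetic issue; it is exactly the phenomenon the paper's proof is built around. The paper first isolates the ``simple case'' where $R$ contains no two consecutive points of $P$ \emph{and} the hyperplane $H=\aff R$ separates $p_{i-1}$ from $p_{i+1}$ for each $p_i\in R$; only in that case is the bookkeeping as clean as you want, because then passing through a point of $R$ swaps the column and switches the side simultaneously, so the sign does not flip. The general case requires the machinery of Claims~\ref{cl:dist} and~\ref{cl:empty}: one replaces each $r\in R$ by a nearby point $q_r$ \emph{on the path} $\pi$, forming $Q\in\RR_\delta$, so that $H^*=\aff Q$ meets $\pi$ transversally at the $q_r$ and contains no vertex of $P$; one then tracks the sign of $\{x\}\cup Q$ as $x$ moves along $\pi$, and analyzes the ``strings'' of consecutive $R$-elements case by case (Case~1 vs.\ Case~2 in the proof). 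Your proposal skips all of this — in particular, it never explains why a sign change forces a genuine transversal crossing rather than a parity flip coming from a touching $R$-vertex, nor how to perturb $H$ coherently when $R$ contains consecutive points of $P$ (so that entire edges of $\pi$ lie in $H$). To repair the argument you would need to re-derive essentially the paper's perturbation lemma and the string analysis, so the gap is a missing core idea, not a detail.
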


\heading{A simple case. } For proving the lemma, we first
consider a simple special case. 
Letting $H$ be the hyperplane spanned by $R$,
we  assume that $R$ contains no consecutive elements from $P$, and
moreover, that $H$ separates 
$p_{i-1}$ from  $p_{i+1}$ whenever $p_i \in R$.

Because of the {\crossi{d+1}} condition,
 $(\pi\cap H)\setminus R$ is either the empty set or a single point, 
which we call~$q$.
Then for $x \in \pi$, we have $\sgn(\{x\} \cup R)=0$
iff $x \in R$ or $x=q$. 

Let us think of $x$ moving along $\pi$.
When it passes through a point $p \in R$, $\sgn(\{x\}\cup R)$
does not change because $x$ moves from one side of $H$ to the other, while
$x$ changes places with $p$ in the order on $\pi$.
The same argument shows that $\sgn(\{x\}\cup R)$
changes  only if $x$ passes through~$q$.

\heading{Auxiliary claims. } Next, we make preparations for 
proving the lemma in general.

The set $P\setminus R$ is non-empty, so we fix one of its elements
and call it $p_{\al}$. We define $\RR_{\de}$ as the set of all sequences 
$(q_i\in \pi: p_i \in R)$ such that 
$|q_i-p_i|<\de$, 
and for $i>\al$, $q_i$ lies on the open segment $(p_{i-1},p_i)$,
while for $i<\al$ it lies on  $(p_i,p_{i+1})$. Here is
a schematic illustration:
\immfig{perturbR}

Since $R$ spans the hyperplane $H$, every set $Q\in\RR_{\de}$ for sufficiently
small $\delta$ spans a hyperplane as well. By general position,
we have $\eps_0:=\dist(P\setminus R,H)>0$.  By continuity,
we also get the next claim:

\begin{claim}\label{cl:dist} There is $\delta_1>0$ such that
$\dist(P\setminus R,\aff Q)>\frac 12 \eps_0$ for all
$Q\in \RR_{\de_1}$.
\end{claim}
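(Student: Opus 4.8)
The plan is to prove this by a routine continuity argument together with the finiteness of $P\setminus R$, so I only outline it. The key observation is that the map $(y,Q)\mapsto\dist(y,\aff Q)$ is continuous near $Q=R$: writing $R=\{p_{i_1},\dots,p_{i_d}\}$ and a nearby tuple as $Q=(q_{i_1},\dots,q_{i_d})$, I would use the elementary identity
\[
\dist(y,\aff Q)=\frac{d\cdot\mathrm{vol}_d\bigl(\conv(\{y\}\cup Q)\bigr)}{\mathrm{vol}_{d-1}\bigl(\conv Q\bigr)},
\]
valid whenever $Q$ is affinely independent, which exhibits the distance as a ratio of two polynomials in the coordinates of $y,q_{i_1},\dots,q_{i_d}$, with denominator nonzero at $Q=R$ (since $R$ is affinely independent). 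Hence there is $\de'>0$ such that every $Q$ with $|q_{i_j}-p_{i_j}|<\de'$ for all $j$ is affinely independent, so that $\aff Q$ is a genuine hyperplane, and on this neighbourhood the above ratio is continuous in $(y,Q)$. I note that $\RR_\de$ lies inside the product of the open $\de$-balls about the points of $R$, so the segment conditions in the definition of $\RR_\de$ are irrelevant here; only the bounds $|q_i-p_i|<\de$ matter.

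Given this, I would finish as follows. For each of the finitely many $p\in P\setminus R$ we have $\dist(p,\aff R)=\dist(p,H)\ge\eps_0>\frac12\eps_0$, so by the continuity above there is $\de_p\in(0,\de']$ with $\dist(p,\aff Q)>\frac12\eps_0$ whenever $|q_{i_j}-p_{i_j}|<\de_p$ for all $j$. Since $P\setminus R$ is finite and nonempty, $\de_1:=\min\{\de_p:p\in P\setminus R\}$ is positive; then for every $Q\in\RR_{\de_1}$ (which is affinely independent, as $\de_1\le\de'$) and every $p\in P\setminus R$ one has $\dist(p,\aff Q)>\frac12\eps_0$, and taking the minimum over $p$ yields $\dist(P\setminus R,\aff Q)>\frac12\eps_0$, which is the claim.

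I do not expect a genuine obstacle here: the statement is a standard perturbation fact. The only point that needs a little care is the continuous dependence of the hyperplane $\aff Q$ (equivalently, of its unit normal and offset) on $Q$ as $Q$ ranges near the affinely independent configuration $R$; the volume-ratio formula makes this transparent, and the finiteness of $P\setminus R$ is exactly what lets a single $\de_1$ work uniformly for all the points whose distance we must control.
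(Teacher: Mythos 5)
Your proof is correct and takes essentially the same approach as the paper, which at this point simply invokes ``continuity'' with no further elaboration; your volume-ratio identity together with the minimum over the finite set $P\setminus R$ supplies precisely the routine details the paper omits. The only tiny imprecision is that $\mathrm{vol}_{d-1}(\conv Q)$ for $Q\subset\R^d$ is the square root of a Gram determinant rather than a polynomial in the coordinates, but it is still continuous and nonzero near $Q=R$, which is all your argument needs.
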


This has the following consequence:

\begin{corollary}\label{cor:qqq} 
If $p_h,p_{h+1}\notin R$ and $H \cap p_hp_{h+1}\ne \emptyset$, then  $\aff Q \cap p_hp_{h+1}\ne \emptyset$ for all $Q \in \RR_{\de_1}$.
\end{corollary}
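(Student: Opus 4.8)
The plan is to compare the two hyperplanes $H = \aff R$ and $\aff Q$, which Claim~\ref{cl:dist} tells us are close in a quantitative sense on the relevant part of space, and argue that a segment $p_hp_{h+1}$ crossing one of them must cross the other. First I would record the key quantitative input: the endpoints $p_h$ and $p_{h+1}$ both lie in $P\setminus R$, so by Claim~\ref{cl:dist} they each have distance more than $\tfrac12\eps_0$ from $\aff Q$ (for $Q\in\RR_{\de_1}$), and also distance at least $\eps_0 > \tfrac12\eps_0$ from $H$. Thus neither endpoint lies on either hyperplane, and each endpoint is ``well inside'' one of the two open halfspaces bounded by $\aff Q$.

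Next I would use the hypothesis $H\cap p_hp_{h+1}\ne\emptyset$ to conclude that $p_h$ and $p_{h+1}$ lie on opposite (closed, hence by the distance bound, open) sides of $H$; since the segment is not contained in $H$ (general position, or again the distance bound), it in fact crosses $H$ transversally, so $p_h$ and $p_{h+1}$ lie in opposite open halfspaces of $H$. The heart of the argument is then to transfer this ``opposite sides'' statement from $H$ to $\aff Q$. The clean way is continuity: consider the family of hyperplanes obtained by moving $R$ to $Q$ along straight line segments, i.e.\ take the affine hull of the points $(1-s)p_i + s q_i$ for $s\in[0,1]$. For every $s$ this is a genuine hyperplane (shrinking $\delta_1$ if necessary so that affine independence is preserved throughout, which it is for $s=0,1$ and hence on a neighborhood, and one can take $\delta_1$ uniform), and by Claim~\ref{cl:dist} applied to each intermediate configuration (which also lies in some $\RR_{\de_1}$-type family — or one simply notes the distance bound is an open condition and holds on the whole segment after a further shrink of $\delta_1$) none of these hyperplanes comes within $\tfrac12\eps_0$ of $p_h$ or $p_{h+1}$. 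Hence the sign of $p_h$ relative to this moving hyperplane never vanishes, so it is constant in $s$; likewise for $p_{h+1}$. At $s=0$ the two signs are opposite (that is the crossing of $H$); therefore at $s=1$ they are still opposite, so $p_h$ and $p_{h+1}$ lie on opposite sides of $\aff Q$, and the segment $p_hp_{h+1}$ meets $\aff Q$.

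The main obstacle is the bookkeeping in the continuity step: one must ensure that throughout the interpolation $s\mapsto \aff((1-s)p_i+sq_i : p_i\in R)$ the affine hull stays a hyperplane (no collapse of dimension) and that the distance bound of Claim~\ref{cl:dist} genuinely applies along the whole path, not just at the endpoints. Both are handled by choosing $\delta_1$ small enough that the relevant determinants stay bounded away from zero and the distance function stays above $\tfrac12\eps_0$ uniformly over $s\in[0,1]$ — a routine compactness argument on the closed interval $[0,1]$ — but it is the one place where ``for sufficiently small $\delta_1$'' has to be invoked with a little care rather than cited verbatim from the claim.
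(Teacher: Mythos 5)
Your proof is correct and is the natural way to fill in what the paper leaves implicit (the corollary is stated without proof, as an immediate consequence of Claim~\ref{cl:dist}). One simplification worth noting: you do not need to shrink $\de_1$ or invoke a separate compactness argument for the interpolation. For $s\in(0,1]$ the interpolated points $(1-s)p_i+sq_i$ lie on the same open edge of $\pi$ as $q_i$ and at distance $s\,|q_i-p_i|<\de_1$ from $p_i$, so the configuration $Q^{(s)}$ is itself a member of $\RR_{\de_1}$, to which Claim~\ref{cl:dist} applies verbatim; at $s=0$ one has $Q^{(0)}=R$ and the bound $\dist(P\setminus R,H)=\eps_0>\eps_0/2$ directly. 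Thus the signed distance of $p_h$ (and of $p_{h+1}$) to $\aff Q^{(s)}$ is continuous in $s$ and bounded away from $0$ on all of $[0,1]$, so its sign is constant, and the ``opposite sides'' conclusion transfers from $H$ to $\aff Q$ exactly as you say, with the $\de_1$ of Claim~\ref{cl:dist} unchanged.
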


\begin{claim}\label{cl:empty} There is a $\de_2 \in (0,\de_1)$ such that $P\cap \aff Q=\emptyset$ for all $Q \in \RR_{\de_2}$.
\end{claim}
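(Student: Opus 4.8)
The plan is to derive the claim from Claim~\ref{cl:dist} together with an elementary collinearity argument; in fact the argument will show that $\de_2=\de_1$ works (so a fortiori any smaller positive value). Fix $Q\in\RR_{\de_1}$. Claim~\ref{cl:dist} already tells us that $\aff Q$ misses every point of $P\setminus R$, so the only thing left to rule out is that $\aff Q$ contains some $p_i\in R$. This is exactly the place where the specific way in which the $q_i$ were perturbed must be used: if we only knew $|q_i-p_i|<\de$, the conclusion could fail, since $\aff Q\to H\ni p_i$ as $\de\to0$.

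The key observation is that for $p_i\in R$ with $i>\al$ the point $q_i$ lies strictly between $p_{i-1}$ and $p_i$, so $p_{i-1}$, $q_i$, $p_i$ are three \emph{distinct} collinear points. Since $\aff Q$ is an affine subspace containing $q_i$, it contains $p_i$ if and only if it contains the whole line through these three points, i.e.\ if and only if it contains $p_{i-1}$; hence $p_i\in\aff Q\iff p_{i-1}\in\aff Q$. Symmetrically, for $p_i\in R$ with $i<\al$, where $q_i\in(p_i,p_{i+1})$, we get $p_i\in\aff Q\iff p_{i+1}\in\aff Q$.

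Now iterate this equivalence. Given $p_i\in R$ with $i>\al$, walk down the indices $i,i-1,i-2,\dots$: as long as the current point is still in $R$, the equivalence above lets us pass to the previous index. The walk cannot stall, because if $p_j\in R$ with $j>\al$ and also $p_{j-1}\in R$, then necessarily $j-1>\al$ as well (we cannot have $j-1=\al$, since $p_\al\notin R$), so the next step is again legitimate. As the indices strictly decrease and are bounded below, the walk must terminate at some index $j-1$ with $p_{j-1}\in P\setminus R$; by Claim~\ref{cl:dist} this point is not on $\aff Q$, and chaining the equivalences back up gives $p_i\notin\aff Q$. The case $i<\al$ is symmetric, walking up the indices and terminating at $p_\al$ or at an earlier point of $P\setminus R$. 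Doing this for every $p_i\in R$, and invoking Claim~\ref{cl:dist} for the points of $P\setminus R$, yields $P\cap\aff Q=\emptyset$ for all $Q\in\RR_{\de_1}$.

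I do not anticipate any genuine obstacle. The only point that needs care is the bookkeeping in the iteration — checking that the walk stays within the index range where the collinearity equivalence is valid and that it terminates at a point of $P\setminus R$ — but this is forced by the definition of $\RR_\de$ (the perturbation of $p_i$ is towards $p_{i-1}$ when $i>\al$ and towards $p_{i+1}$ when $i<\al$) together with the fact that $p_\al\notin R$.
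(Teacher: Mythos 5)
Your proof is correct and rests on exactly the same idea as the paper's: starting from a putative $p_i \in R \cap \aff Q$, use the collinearity of $p_{j-1}, q_j, p_j$ (for $j>\al$, say) to walk the index down until you escape $R$, then invoke Claim~\ref{cl:dist}. The only difference is presentational: the paper sets this up as a contradiction argument with a sequence $\de_m\to0$ and a pigeonhole pass to a subsequence, whereas you observe (correctly) that the walk works for any fixed $Q\in\RR_{\de_1}$, so $\de_2=\de_1$ already does the job and the limit machinery is unnecessary.
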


\begin{proof}
If not, then there is a sequence $\de_m \to 0$ and $Q_m \in \RR_{\de_{m}}$ with $P \cap \aff Q_m\ne \emptyset$. Then, for a suitable subsequence,
 $P \cap \aff Q_m$ contains a fixed element $p_h \in P$. 
We have $p_h \in R$ because the $Q_m$ have distance at least
$\eps_0/2$ to $P\setminus R$.

Let $(p_i,p_{i+1},\ldots,p_j)$ be the \emph{string} of $R$ containing
$p_h$, i.e., a maximal contiguous subsequence of $P$ whose points all lie
in $R$ (i.e., $p_{i-1},p_{j+1}\notin R$;
we also admit $i=1$ and $j=n$, as well as $i=j$). 
Thus $i\le h\le j$ and the polygonal path $p_i\ldots p_j$ 
is contained in~$H$.

 Let us assume $h>\al$; then $i>\al$ as well. 
Since $p_h \in \aff Q_m$ and $q_h \in Q_m$, the whole line $\aff \{p_h,q_h\}$
is contained in $\aff Q_m$. Since $p_{h-1}$ is on this line, it is in 
$\aff Q_m$ as well. This shows (by induction) that $p_h,p_{h-1},\ldots,p_i,p_{i-1} \in \aff Q_m$. Thus $p_{i-1}\in \aff Q_m$, which contradicts
 Claim~\ref{cl:dist}. The argument for $h<\al$ is symmetric.
%
%Similarly, assuming $h<\al$, we get that $p_{j+1} \in \aff Q$, 
%which is again a contradiction, finishing the proof of the claim.
\end{proof}

\begin{proof}[Proof of Lemma~\ref{l:flip}]
 We fix some $\de \in (0,\de_2)$ and $Q\in  \RR_{\de}$, and set $H^*=\aff Q$. 
We observe that $H$ and $H^*$ separate the points of $P \setminus R$ the same way. Moreover, if $(p_i,\ldots,p_j)$ is a string of $R$ and $i>\al$, 
then the points $p_{i-1},p_i,\ldots,p_j$ lie alternately on the two sides of $H^*$. This follows from the fact that the path $p_{i-1}p_i\ldots p_j$ intersects $H^*$ in the points $q_i,\ldots,q_j$. Similarly, for $i<\al$,
the points $p_i,\ldots,p_j,p_{j+1}$ lie alternately on the two sides of~$H^*$.

We again let $x$ move along $\pi$. With $R=(p_{i_1},\ldots,p_{i_{d+1}})$,
 we have
\[
\sgn(\{x\}\cup R)=\sgn \det \left( \begin{array}{ccccccc}    1 \;      &  \ldots &  1& 1 &   1 \;     & \ldots  & 1  \\
                                                            p_{i_1} &  \ldots & p_{i_{j-1}} &  x & p_{i_j} & \ldots & p_{i_{d+1}} \end{array} \right)
\]
where the position of the column with $x$ is determined by $x$ lying between $p_{i_{j-1}}$ and $p_{i_j}$. Then 
%\jirka{You wrote ``Define...'' here, but $\sgn$ already has meaning.}
\[
\sgn(\{x\}\cup Q)=\sgn \det  \left( \begin{array}{ccccccc}    1 \;      &  \ldots & 1& 1 &   1 \;     & \ldots  & 1 \;\;  \\
                                                            q_{i_1} &  \ldots &  q_{i_{j-1}}& x & q_{i_j} & \ldots & q_{i_{d+1}} \end{array} \right)
\]
where $Q=(q_{i_1},\ldots,q_{i_{d+1}})$ and the same remark applies to the position of the $x$ column. 

Clearly $\sgn(\{x\}\cup R)=\sgn(\{x\}\cup Q)$ when $x \in P \setminus R$. Thus, it suffices to check how $\sgn(\{x\}\cup Q)$ changes when $x$ moves through $q_i,\ldots,q_j$ for the string $p_i,\ldots,p_j$. Note that $\sgn(\{x\}\cup Q)$ changes only when $x$ passes some point in $Q\cap \pi$.

Just like in the basic case, $\sgn(\{x\}\cup Q)$ does not change when $x$ passes $q_h$ because then $x$ moves from one side of $H^*$ to the other and it also changes places with $q_h$. Thus, $\sgn(\{p_{i-1}\}\cup Q)=\sgn(\{x\}\cup Q)$ when $x$ just passed $q_j$. 

Now we assume that $\al< i$; the other option $\al>i$ is symmetric and follows the same way. There are two cases.

\heading{Case 1:} when $p_j$ and $p_{j+1}$ are on the same side of $H^*$. 
Then $\sgn(\{p_j\}\cup Q)=\sgn(\{p_{j+1}\}\cup Q)$, and so $\sgn(\{p_{i-1}\}\cup Q)=\sgn(\{p_{j+1}\}\cup Q)$, implying $\sgn(\{p_{i-1}\}\cup R)=\sgn(\{p_{j+1}\}\cup R)$. So there is no sign change between $p_{i-1}$ and $p_{j+1}$ in the sign sequence of $R$.

\heading{Case 2:} when $p_j$ and $p_{j+1}$ are on opposite sides of $H^*$. 
Then $H^*\cap p_jp_{j+1}$ is a point $q$, and $\sgn(\{x\}\cup Q)$ 
changes sign when $x$ moves through $q$. Consequently, $\sgn(\{p_{j+1}\}\cup R)=-\sgn(\{p_{i-1}\}\cup R)$, and there is a sign change in the sign sequence 
of $R$ here. 

But since $H^*\cap \pi$ contains already $d+1$ points, Case~2 cannot occur 
anywhere else. Also, the case in Claim~\ref{cor:qqq} cannot come up either,
since that would mean $H^* \cap \pi$ contains $d+2$ points. Thus, the only 
sign change in the sign sequence of $R$ occurs between $p_{i-1}$ and $p_{j+1}$. 
\end{proof}

\section{\boldmath $k$-sequences and flip $k$-sequences}

Now we will define a combinatorial abstraction of
point sequences in $\R^k$. A \emph{$k$-{sequence}}
is a sequence  $S=(a_1,\ldots,a_n)$,
 where $a_1,\ldots,a_n$ are distinct (abstract) elements,
together with a mapping $\sgn$ that assigns either $+1$ or $-1$
to every $(k+1)$-element subset $A\subseteq \{a_1,\ldots,a_n\}$
(sometimes we will regard $A$ as a subsequence, with
the elements in the same order as in $S$).
We will also say that $A$ is \emph{positive} or \emph{negative}
if $\sgn A=1$ or $\sgn A=-1$, respectively.

We subdivide the sequence $S$ into contiguous blocks with one-point overlaps:
The first block is $B_1=(a_1,\ldots,a_{i_1})$ with $i_1$ maximal such that
all $(k+1)$-point subsequences in $B_1$ have the same sign $\si_1$.
The next one is $B_2=(a_{i_1},\ldots,a_{i_2})$ with $i_2$
maximal such that all $(k+1)$-point subsequences in $B_2$ have the
same sign $\si_2$, and so on, up until some $B_m=(a_{i_{m-1}},\ldots,a_n)$,
where $B_m$  either has at most $k$ elements,
 or it has more than $k$ elements and every $(k+1)$-tuple
 in it has the same sign~$\si_m$.

We call this partition the \emph{greedy partition} of $S$;
here both $m=m(S)$ and the blocks $B_j$ are uniquely determined.
Note that each $B_j$, $j < m$, contains a subset
$D_j$ of size $k$ such that $\sgn (\{a_{i_j+1}\}\cup D_j)\ne \si_j$.

%\imre{MENTION HERE that other homogeneous partitions have at least $m(S)$ blocks??????}
%\jirka{We don't seem to be using that, do we?}

The following lemma shows that $S$ has a short subsequence
$S^*$ whose greedy partition is similar to that of~$S$.

\begin{lemma}\label{l:short} There is a subsequence $S^*$ of $S$,
which we call the \emph{reduced version of $S$}, such
that $m(S^*)=m(S)$, every block
of the greedy partition of $S^*$ contains at most $k+3$ elements,
 and the last one exactly $2$.
Moreover, every string of $2k+5$ consecutive elements of $S^*$ contains both
a positive $(k+1)$-tuple and a negative one.
\end{lemma}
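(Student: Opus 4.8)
The plan is to build $S^*$ from the greedy partition $B_1,\dots,B_m$ of $S$ by keeping only a bounded number of witnesses from each block, and then to argue that the greedy partition of the resulting subsequence still has exactly $m$ blocks of the prescribed sizes. First I would record what each block must certify: for $j<m$, the block $B_j$ is defined by the maximality of $i_j$, so the $(k+1)$-tuples inside $B_j$ are all of sign $\sigma_j$, but adding the next element $a_{i_j+1}$ creates a $(k+1)$-tuple of the opposite sign; this is exactly the set $D_j$ of size $k$ with $\sgn(\{a_{i_j+1}\}\cup D_j)\neq\sigma_j$ mentioned right before the lemma. For the last block $B_m$, I want to keep just two elements, so I need a $(k+1)$-tuple of sign $\sigma_{m-1}$ available at the very end while still having a sign-change forced.

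The construction I would use: for each interior block $B_j$, $j<m$, put into $S^*$ the $k$ elements of $D_j$ together with the boundary element $a_{i_j+1}$ (which is the first element of $B_{j+1}$); also keep the global endpoints $a_1$ and $a_n$. A moment's bookkeeping shows this gives at most $k+1$ or so elements per block; to hit the exact bounds ($k+3$ per block, exactly $2$ in the last) I would keep, for each $j$, a $(k+1)$-element subset of $B_j$ witnessing the sign $\sigma_j$ plus the "flip witness" $D_j\cup\{a_{i_j+1}\}$, overlapping the boundary element with the next block. The key claim is then that the greedy partition of $S^*$ reproduces these blocks exactly: within the chosen elements of $B_j$ every $(k+1)$-tuple has sign $\sigma_j$ (since they all come from $B_j$ and signs are inherited from $S$), so greedy does not cut early; and the moment greedy reaches across into the $D_j$-plus-$a_{i_j+1}$ configuration it is forced to stop, because that tuple has sign $\ne\sigma_j$. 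Making $|B_j\cap S^*|$ land in $[2,k+3]$ and the last block exactly $2$ is then a matter of adding or trimming a few filler elements from each $B_j$ without disturbing the sign-homogeneity of tuples entirely inside that portion.

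For the "moreover" clause — every window of $2k+5$ consecutive elements of $S^*$ contains both a positive and a negative $(k+1)$-tuple — I would argue as follows. Since each block of $S^*$ has at most $k+3$ elements and consecutive blocks overlap in one element, any $2k+5$ consecutive elements of $S^*$ span at least two full consecutive blocks $B_j,B_{j+1}$ (a crude count: $2(k+3)-1=2k+5$, so a window this long cannot be contained in the union of two blocks unless it is exactly two blocks, and if it reaches into a third block it certainly contains block $B_{j+1}$ entirely together with the flip witness at its right end). Within such a stretch we have a $(k+1)$-tuple of sign $\sigma_j$ (from inside $B_j$'s retained elements, or from $B_{j+1}$ if $\sigma_j=\sigma_{j+1}$) and one of sign $\sigma_{j+1}$; and since the greedy partition chose a \emph{new} sign at each cut, $\sigma_j$ and $\sigma_{j+1}$ cannot agree at the boundary in the way that would kill this — more precisely, the flip witness $D_j\cup\{a_{i_j+1}\}\subseteq B_j\cup B_{j+1}$ retained in $S^*$ has sign $\ne\sigma_j$, so whatever $\sigma_j$ is, both signs appear. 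One has to handle the window that meets the short last block $B_m$ separately, but there the penultimate block plus its flip witness still supplies both signs.

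The main obstacle I expect is the careful interval arithmetic making the three quantitative constraints compatible simultaneously: keeping exactly the $k+1$ elements needed to witness $\sigma_j$, plus the $k$ elements of $D_j$ (which may or may not overlap the $\sigma_j$-witness), plus one boundary element, while ensuring the total stays $\le k+3$ (so the $D_j$ must be chosen to overlap the $\sigma_j$-witness heavily), and simultaneously forcing the last block down to exactly $2$ elements. Relatedly, I must check that shrinking a block to its retained elements never accidentally merges it with a neighbor — i.e. that the flip is still \emph{visible} after reduction — which is where the precise definition of $D_j$ (a size-$k$ set whose union with the boundary point flips the sign) does the real work, and which requires that $D_j$ actually lies inside $B_j$, as guaranteed just before the lemma statement. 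Once those fit together, the rest is routine.
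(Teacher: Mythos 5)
Your overall plan is the paper's plan — keep a bounded set of witnesses from each greedy block including the flip set $D_j$, and argue that the greedy partition of the subsequence reproduces the block structure — but the construction is not actually pinned down, and the place where you wave your hands is exactly where the real difficulty lies.

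The key constraint you are missing is that for $j<m$, the block $B_j^*$ of the reduced sequence must contain \emph{at least} $k+1$ elements, not merely ``at least $2$'' as your interval $[2,k+3]$ suggests. If $B_j^*$ had only $k$ elements, there would be no $(k+1)$-tuple inside it to establish $\sigma_j$, so the greedy algorithm on $S^*$ would simply not cut at the intended boundary and the blocks would merge, destroying $m(S^*)=m(S)$. Your ``basic construction'' ($D_j$ together with $a_{i_j+1}$) contributes only $k$ elements to $B_j$ (recall $a_{i_j+1}$ is the \emph{second} element of $B_{j+1}$, not the first as you wrote — the first is $a_{i_j}$), so it fails for precisely this reason. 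Your refined version — an independent $(k{+}1)$-element $\sigma_j$-witness plus $D_j$ — can have up to $2k+1$ elements, and your escape (``choose $D_j$ to overlap the $\sigma_j$-witness heavily'') inverts the logic: $D_j$ is constrained by the flip property, while any $(k{+}1)$-subset of $B_j$ already witnesses $\sigma_j$, so the witness should be chosen to contain $D_j$, not vice versa. The paper's precise recipe resolves all of this at once: from each $B_j$ ($j<m$) keep $a_{i_{j-1}},a_{i_{j-1}+1},a_{i_j}$ and all of $D_j$, plus one extra element only if all three boundary points already lie in $D_j$; this automatically lands the block size in $[k+1,\,k+3]$ and guarantees that the element following $B_j^*$ in $S^*$ is $a_{i_j+1}$, so the known flip $\sgn(D_j\cup\{a_{i_j+1}\})\ne\sigma_j$ forces the cut. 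Your ``moreover'' argument points in the right direction, but the target is one full block $B_j^*$ \emph{plus the single next element} $a_{i_j+1}$ inside the window, not two full blocks; the count $(k+2)+(k+2)+1=2k+5$ is tight and needs to be done carefully, including the degenerate case where the window abuts the two-element last block.
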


\begin{proof}
Let $B_j=(a_{i_{j-1}},\ldots,a_{i_j})$ be a block of the greedy partition
of $S$ with $j<m$. Let us fix a $d$-element subset $D_j$
of $B_j$ as above, i.e., with $\sgn (\{a_{i_j+1}\}\cup D_j)\ne \si_j$.

The subsequence $S^*$ contains the following elements of
 $B_j$:  $a_{i_{j-1}}$,
$a_{i_{j-1}+1}$, $a_{i_j}$, the elements of $D_j$,
and one more (arbitrarily chosen) element if the first three
are all contained in $D_j$. All the other elements are discarded.
From the last block we keep the first two elements.

Let us consider the greedy partition of $S^*$. By induction on $j$,
it is easy to see that for  $j<m$,
the $j$th block $B^*_j$ starts with $a_{i_{j-1}}$,
ends with $a_{i_j}$, and the sign of $D_j\cup \{a_{i_j+1}\}$
is different from $\si_j$, which is the sign of (all)
$(k+1)$-tuples in  $B_j^*$.

%Let us call $S^*$ the \emph{reduced version} of $S$.
It follows that every string of $2k+5$ consecutive elements of $S^*$
contains a full block $B_j^*$ plus the next element $a_{i_j+1}$.
The sign of the first $k+1$ elements of $B_j^*$
is different from $\sgn(D_j\cup \{a_{i_j+1}\})$.
\end{proof}

A $k$-sequence $S=(a_1,\ldots,a_n)$ is called  a \emph{flip
 $k$-sequence} if it has the property as in Lemma~\ref{l:flip};
that is,  for every $k$-element $A \subset \{a_1,\dots,a_n\}$,
the \emph{sign sequence of $A$}
\begin{equation}\label{eq:flipp}
\Bigl(\sgn(\{a_i\}\cup A):i=1,2,\ldots,n,\ a_i\not\in A\Bigr)
\end{equation}
has at most one sign change.
The following result of combinatorial nature is the key
step in the proof of Theorem~\ref{t:pmain}.

\begin{theorem}\label{t:flip}
For every $k\ge 1$ there is $c(k)$ such that the greedy partition of every flip
$k$-sequence has at most $c(k)$ blocks.
\end{theorem}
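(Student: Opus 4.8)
The plan is to induct on $k$. For $k=1$, a flip $1$-sequence is a sequence of reals (abstractly) where each singleton $A=\{a_i\}$ has a sign sequence with at most one sign change, which forces the sign function $\sgn(\{a_i,a_j\})$ to be essentially monotone in each variable, so the greedy partition has a bounded number of blocks (in fact the whole sequence is monotone-like, $m \le 2$ or so). For the inductive step, assume the bound $c(k-1)$ is known and consider a flip $k$-sequence $S=(a_1,\ldots,a_n)$ with greedy partition $B_1,\ldots,B_m$; the goal is to bound $m$ by $c(k)$. First I would pass to the reduced version $S^*$ via Lemma~\ref{l:short}, so that every block of $S^*$ has at most $k+3$ elements and every window of $2k+5$ consecutive elements contains both a positive and a negative $(k+1)$-tuple; this lets us translate ``$m$ large'' into ``$S^*$ is long'' (roughly $n^* \ge (k+2)(m-1)$) and gives us a rich supply of sign changes.

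The core idea is a projection/link argument: fix a single element $a$ of $S^*$ and define, on the remaining elements, a $(k-1)$-sequence $S/a$ whose sign function is $\sgn_{S/a}(A') := \sgn_S(\{a\}\cup A')$ for $(k)$-element sets $A'$. The key point to verify is that $S/a$ is again a \emph{flip} $(k-1)$-sequence: given a $(k-1)$-element set $A'$, its sign sequence in $S/a$ is exactly the sign sequence of the $k$-element set $A = \{a\}\cup A'$ in $S$ (restricted to the appropriate index range), which has at most one sign change by hypothesis. Then the induction hypothesis bounds the number of blocks of the greedy partition of $S/a$ by $c(k-1)$, for \emph{every} choice of the contracted element $a$. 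The remaining task — and I expect this to be the main obstacle — is to recover a bound on $m(S^*)$ from the fact that all the contractions $S/a$ have boundedly many blocks. The difficulty is that a sign change across a block boundary $a_{i_j}, a_{i_j+1}$ in $S$ witnessed by the $k$-set $D_j$ need not survive in any single contraction $S/a$; one must choose $a$ (or a few choices of $a$) adapted to the boundary. The natural approach is: a block boundary of $S$ at position $i_j$ comes with a set $D_j$ of size $k$ with $\sgn(\{a_{i_j+1}\}\cup D_j)\ne \si_j$; pick any $a\in D_j$ and set $A' = D_j\setminus\{a\}$, so that in $S/a$ the sign sequence of $A'$ changes between $a_{i_j}$ (or an earlier element of $B_j$) and $a_{i_j+1}$. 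Since each such contraction has at most $c(k-1)$ blocks, it can ``host'' at most $c(k-1)-1$ sign changes for each fixed $A'$, but we have many choices of $A'$, so a counting argument over a bounded-size family of relevant $k$-sets should bound the number of boundaries.

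To make the counting precise, I would localize: using the window property from Lemma~\ref{l:short}, any $O(k)$ consecutive blocks of $S^*$ lie in a window of $O(k)$ elements. Within such a window, the sets $D_j$ witnessing the boundaries are drawn from the $\binom{O(k)}{k}$ many $k$-subsets of that window, a number depending only on $k$. For each such $k$-set $D$ and each $a\in D$, the contracted sequence $S/a$ has $\le c(k-1)$ blocks, hence the sign sequence of $D\setminus\{a\}$ in $S/a$ has at most one sign change on the whole of $S^*$ (by the flip property) — wait, that already gives at most one sign change globally, which is even stronger. So actually each pair $(D,a)$ can account for at most one boundary of $S^*$ globally; summing over the $O\!\big(\binom{\cdot}{k}\cdot k\big) = O_k(1)$ pairs $(D,a)$ bounds the total number of boundaries, hence $m(S^*) = m(S) \le c(k)$ for a suitable $c(k)$.

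The step I expect to be genuinely delicate is verifying that $S/a$ is a flip $(k-1)$-sequence in the correct abstract sense and that the sign sequence of $D\setminus\{a\}$ in $S/a$ really does change sign at the boundary identified by $D$ — one must be careful that the element $a$ itself lies outside the relevant block window (or handle the case $a=a_{i_j+1}$ or $a\in B_j$ separately, choosing a different witness), and that the ``at most one sign change'' for the contracted sequence is applied to the right index interval. Once that bookkeeping is done, the rest is a clean double-counting: a bounded number of candidate witnesses, each responsible for at most one flip.

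\begin{proof}[Proof sketch of Theorem~\ref{t:flip}]
We argue by induction on $k$. The base case $k=1$ is immediate: if $S=(a_1,\ldots,a_n)$ is a flip $1$-sequence, then for each $i$ the sign sequence of $\{a_i\}$ has at most one sign change, from which one checks directly that the greedy partition has at most a bounded number of blocks (indeed $m\le 3$ suffices).

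Assume the statement holds for $k-1$, with constant $c(k-1)$. Let $S$ be a flip $k$-sequence; passing to the reduced version $S^*$ of Lemma~\ref{l:short}, it suffices to bound $m(S^*)=m(S)$. For an element $a$ of $S^*$, define the \emph{contraction} $S/a$ to be the $(k-1)$-sequence on the remaining elements with $\sgn_{S/a}(A') := \sgn_{S^*}(\{a\}\cup A')$. For any $(k-1)$-set $A'$ not containing $a$, the sign sequence of $A'$ in $S/a$ equals the restriction of the sign sequence of the $k$-set $\{a\}\cup A'$ in $S^*$, hence has at most one sign change; thus $S/a$ is a flip $(k-1)$-sequence, and by induction its greedy partition, and more to the point each of its sign sequences, has at most one sign change per fixed $A'$ and at most $c(k-1)$ blocks.

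Now fix a block boundary of $S^*$ at position $i_j$ ($j<m$), with witnessing $k$-set $D_j$ satisfying $\sgn(\{a_{i_j+1}\}\cup D_j)\ne\si_j$. Pick $a\in D_j$ (if $D_j=\{a_{i_j+1},\ldots\}$ contains $a_{i_j+1}$, replace $D_j$ by another witness, possible since $B_j^*$ has at least $k+1$ elements other than $a_{i_j+1}$ when $j<m$) and set $A'=D_j\setminus\{a\}$. By the window property of Lemma~\ref{l:short}, the relevant block $B_j^*$ and the element $a_{i_j+1}$ lie in a window of $O(k)$ consecutive elements, so $D_j$ is one of at most $\binom{O(k)}{k}=O_k(1)$ sets, and the pair $(A',a)$ ranges over a set of size $O_k(1)$. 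For each fixed $(A',a)$, the sign sequence of $A'$ in $S/a$ has at most one sign change, so it can witness at most one block boundary of $S^*$. Summing over all $O_k(1)$ possible pairs bounds the number of boundaries of $S^*$, giving $m(S^*)\le c(k)$ for a suitable $c(k)$.
\end{proof}
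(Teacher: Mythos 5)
Your base case and your general setup (induct on $k$, reduce to $S^*$ via Lemma~\ref{l:short}, observe that contracting at a single element $a$ yields a flip $(k-1)$-sequence $S/a$) are all in the right spirit, and the observation that $S/a$ is flip is correct and also used in the paper. But the inductive step as you write it has a genuine gap: the counting argument does not close.

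The flaw is the claim that ``the pair $(A',a)$ ranges over a set of size $O_k(1)$.'' Each block boundary $j$ of $S^*$ produces a witness set $D_j$ drawn from a window of $O(k)$ elements \emph{near the $j$-th boundary}; these windows slide along $S^*$, so across all $j$ the potential witnesses $D_j$ (and the elements $a\in D_j$) form a set whose size scales with $m$, not with $k$ alone. Saying ``for each fixed $(A',a)$, the sign sequence of $A'$ in $S/a$ can witness at most one boundary'' is true but useless once the family of pairs you sum over grows with $m$: you end up bounding $m-1$ by something of order $m$. There is no single element $a$ (or bounded set of elements) whose contraction sees all the boundaries, precisely because a boundary far from $a$ has its witness $D_j$ disjoint from $\{a\}$.

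The paper's inductive step resolves exactly this by contracting at one \emph{fixed} element, namely the last one $a_n$: define $T=(a_1,\ldots,a_{n-1})$ with $\sgn_T(A):=\sgn_S(A\cup\{a_n\})$. By induction, $T$ has at most $c(k-1)$ blocks, so (since $S^*$ is long) some block $B$ of $T$ has $\ge 4k+10$ elements, with $\sgn(A\cup\{a_n\})=+1$ for every $k$-subset $A\subset B$, say. The reducedness of $S^*$ then supplies a positive $(k+1)$-tuple among the first $2k+5$ elements of $B$ and a negative one among the last $2k+5$, hence a sliding index $j$ with $\sgn(b_j,\ldots,b_{j+k+1})=+1$ and $\sgn(b_{j+1},\ldots,b_{j+k+2})=-1$. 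Taking $A=\{b_{j+1},\ldots,b_{j+k+1}\}$ gives three entries $+1,-1,+1$ (the last from $a_n$) in the sign sequence of $A$, contradicting flipness. The key structural point you are missing is that you should use the induction hypothesis to produce a \emph{large monochromatic block} in a single contraction, and then contradict reducedness inside that block---not try to globalize local witnesses by a bounded double count.
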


We prove this result in the next section. Now we show
 how it implies Theorem~\ref{t:pmain}.

\begin{proof}[Proof of Theorem~\ref{t:pmain}.]
We assume that $P=(p_1,\ldots,p_n) \subset \R^d$ is in general position.
Let $\pi=p_1\cdots p_n$ be the corresponding polygonal path.
Lemma~\ref{l:flip} shows that $(p_1,\ldots,p_n)$ with the
sign of $(d+1)$-tuples given by their orientation is a
flip $d$-sequence. Theorem~\ref{t:flip} says that
its greedy partition has at most $c(d)$ blocks.
All $(d+1)$-tuples in $B_j$ have the same sign, so
$B_j=(p_{i_{j-1}},\ldots,p_{i_j})$
is order-type homogeneous, and thus the polygonal path
$p_{i_{j-1}}\cdots p_{j_i}$ is convex.
It follows that $M(d)\le c(d)$.
\end{proof}

\section{Proof of Theorem~\ref{t:flip}}

\begin{proof}
We proceed by induction on $k$.

\heading{The case \boldmath$k=1$. } We will show that $c(1)=3$
(instead of reading this part,
the reader may perhaps prefer to find a simple proof of $c(1)\le 5$, say).

Let $S=(a_1,\ldots,a_n)$ be a flip $1$-sequence, and
let $B_1,\ldots,B_m$ be the blocks of its greedy partition.
Each $B_i$ has the form $(b_i,x_i,\ldots,c_i)$
where $b_{i+1}=c_i$, and $B_i$ contains an element $d_i$
such that $\sgn(d_i,x_{i+1})\ne\si_i$.
Note that $x_1$ and $d_m$ are undefined.
\medskip

\noindent\emph{Observation.}
If  $B_i$ and $B_{i+1}$ are two consecutive blocks, both
positive, then
$d_i$, $c_i=b_{i+1}$, and $x_{i+1}$ are three distinct elements of $S$.
Moreover, for every $a\in S$ preceding $d_i$ we have $(a,x_{i+1})$
 negative, and similarly, for every $a$ following $x_{i+1}$
we have   $(d_i,a)$ negative.
\bigskip

Only the last two statements need an explanation. Since $(c_i,x_{i+1})$
is positive
and $(d_i,x_{i+1})$ is negative, $(a,x_{i+1})$ must be negative
for $a$ preceding $d_i$, for
otherwise, there are two sign changes in the sign sequence of
$\{x_{i+1}\}$. The statement
about $(d_i,a)$ is proved in the same way.

The proof of $c(1)\le 3$ comes in fives steps. We assume w.l.o.g.\ that
$(a_1,a_2)$ is positive.
\begin{enumerate}
\item[\rm Step~1.]
 If all $(a_i,a_{i+1})$ are positive, then $m<4$.
Indeed, supposing $B_4$ exists,
all blocks are positive, $(d_1,x_2)$ is negative,
 and  $(d_1,d_3)$ is negative by the observation.
Also, $(d_3,x_4)$ is negative and there are two sign changes in the sign sequence of $\{d_3\}$, since
 $(b_3,d_3)$ or $(d_3,c_3)$ (or both) are positive.

\item[Step 2.]
 If $j$ is the smallest index with $(a_j,a_{j+1})$ negative, then $a_j=c_i=b_{i+1}$,
$B_i$ is a positive block, and  $B_{i+1}$ is
a negative one. Assume $B_{i-1}$ exists. Then it is positive,
$(d_{i-1},x_i)$ is negative, and thus $(d_{i-1},a_j)$ is negative by the
observation. But then there are two
sign changes in the sign sequence of $\{a_j\}$:
 $(d_{i-1},a_j)$ and $(a_j,x_{i+1})$ are negative and
$(b_i,a_j)$ is positive. Thus $B_{i-1}$ cannot exist,
$i=1$, and there is a single block before~$a_j$.

\item[Step 3.] Thus $B_1$ is positive and $B_2$ negative.
Assume $B_3$ negative; then
$(d_2,x_3)$ is positive and so is $(b_2,x_3)$ by the observation. Consequently,
 there are two
sign changes in the sign sequence of $\{b_2\}$:
$(b_1,b_2)$ and $(b_2,x_3)$ are positive but $(b_2,x_2)$
is negative. We conclude that $B_3$ is a positive block.

\item[Step 4.] Assume $B_4$ exists and is positive. Then $(d_3,x_4)$ is negative and so is $(b_3,x_4)$
by the observation. Then there are two sign changes in the sign sequence of
$\{b_3\}$: $(b_2,b_3)$ and
$(b_3,x_4)$ are negative and $(b_3,c_3)$ is positive.

\item[Step 5.] We are left with the case when $B_1,B_3$ are positive and $B_2,B_4$ negative. If $(b_2,b_4)$
is positive, then there are two sign changes in the sign sequence of
$\{b_2\}$: $(b_1,b_2)$ and $(b_2,b_4)$
are positive and $(b_2,c_2)$ negative. Similarly, if $(b_2,b_4)$ is negative, then there are two sign
changes in the sign sequence of~$\{b_4\}$.

Consequently, $B_4$ does not exist: $m<4$ and so $c(1)\le 3$.
\end{enumerate}

The example $S=(a_1,a_2,a_3,a_4)$
with $a_1,a_2$ and $a_3,a_4$ positive and all other pairs negative shows that $c(1)=3$.

\heading{The inductive step from $k-1$ to $k$. }
Assuming that the greedy partition of each flip $(k-1)$-sequence
has at most $c(k-1)$ blocks, we will show that the greedy partition
of an arbitrary flip $k$-sequence $S=(a_1,\ldots,a_n)$
has at most $c(k):= 1+ (4k+10)c(k-1)/k$ blocks.

So we suppose the contrary that $S$ as above has $m>c(k)$ blocks.
We can further assume that $S$ is reduced in the sense of
Lemma~\ref{l:short}, for otherwise, we can replace $S$ by $S^*$.
Since each $B_i$, $i<m$, has at least $k+1$ elements, and $|B_m| = 2$,
the length of $S$ is at least
\[
n\ge (m-1)k+2 > (4k+10)c(k-1) +2.
\]

We consider the sequence
$T=(a_1,\ldots,a_{n-1})$ and regard it as a $(k-1)$-sequence
by defining, for a $k$-element
$A\subset \{a_1,\ldots,a_{n-1}\}$, the sign $\sgn A:=\sgn(A\cup\{a_n\})$.
It is clear that $T$ is a flip $(k-1)$-sequence, and so
its greedy partition has at most $c(k-1)$ blocks.
One of the blocks, which we call $B$, has at least
$(n-1)/c(k-1)\ge 4k+10$ elements. We may assume w.l.o.g.\ that
$\sgn A=+1$ for every $k$-element subset of~$B$.

Since $S$ is reduced,
there is a positive $(k+1)$-tuple $(b_1,\ldots,b_{k+1})$ among the first
$2k+5$ elements of $B$, and
a negative $(k+1)$-tuple $(b_{k+2},\ldots,b_{2k+2})$
among the last $2k+5$ elements of $B$. The sign of the $(k+1)$-tuple
$(b_i,\ldots,b_{i+k})$ changes from $+1$ to $-1$ as $i$ moves
through $1,2,\ldots,k+2$, and so there is some
$j$ with $\sgn (b_j,\ldots,b_{j+k+1})=+1$ and
$\sgn (b_{j+1},\ldots,b_{j+k+2})=-1$.

We set $A=\{b_{j+1},\ldots,b_{j+k+1}\}$.
Then we have $\sgn (\{b_j\} \cup A)=+1$ and $\sgn (A\cup \{b_{j+k+2}\})=-1$,
while $\sgn (A \cup \{a_n\})=+1$ by the choice of the
block $B$. Hence the sign sequence of $A$ has at least two sign
changes, contradicting the assumption that $S$
is a flip $k$-sequence. This contradiction finishes the proof
of Theorem~\ref{t:flip}.
\end{proof}

\heading{Remark.}
This argument gives $c(k)=\exp(O(k))$. We note that $c(1)=3$ and $M(1)=3$.
The above proof gives $c(2)\le 22$ while $M(2)=4$.

\section{From polygonal paths to curves: proof of Theorem~\ref{t:main}}\label{s:poly-c}

Here we show how Theorem~\ref{t:pmain} implies Theorem~\ref{t:main}.

We assume  $\gamma\:I\to\R^d$ is a {\crossi{d+1}} curve.

Let us say that an $n$-tuple $T=(t_1,\ldots,t_n)$,
$t_1,\ldots,t_n\in I$, $t_1<\cdots<t_n$, is an \emph{$\eps$-sample} if
every subinterval of $I$ of length $\eps$ contains some $t_i$.
Let $\pi=\pi(\gamma,T)=\gamma(t_0)\gamma(t_1)\cdots\gamma(t_n)$ be the polygonal
line determined by~$T$.

First we observe that for every $\eps>0$, there is an $\eps$-sample
$T$ with $\pi(\gamma,T)$ in general position. Indeed, having already placed
$k$ points of $T$, so that their $\gamma$-images are in general
position, we consider the finitely many hyperplanes spanned by $d$-tuples of
these $\gamma$-images. Since $\gamma$ is {\crossi{d+1}}, each of
these hyperplanes contains at most one extra point of $\gamma$, and
so at every step of the construction, we have only finitely many
excluded points of $I$. Thus, we can construct an $\eps$-sample
as desired.

Next, for every $\eps>0$, we fix an $\eps$-sample $T=T(\eps)$
with $\pi(\gamma,T(\eps))$ in general position.
Let $M=M(d)$ be as in Theorem~\ref{t:pmain}; by that
theorem, we can also fix a subdivision
of $I$ into $M$ subintervals such that the restriction of
$\pi(T(\eps),\gamma)$ on each of them is convex.
By compactness, these subdivisions have a cluster point for $\eps\to 0$;
we denote its intervals by $I_1,\ldots,I_M$.

It remains to show that $\gamma$ restricted to each $I_j$ is convex.
This follows from the next lemma, applied with $I=I_j$ and $\gamma=\gamma_j$.

\begin{lemma}\label{l:sample}
 Let $\gamma\:I\to\R^d$ be a {\crossi{d+1}} curve, and let us suppose that
for every $\eps>0$ there is an $\eps$-sample $T(\eps)$ such that the
corresponding polygonal path $\pi(\gamma,T(\eps))$ is in general position
and convex. Then $\gamma$ is convex as well.
\end{lemma}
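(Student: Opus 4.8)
The plan is to prove the contrapositive flavor: assume $\gamma$ is not convex and derive a contradiction with the convexity of the approximating polygonal paths for small $\eps$. So suppose some hyperplane $h$ meets $\gamma$ in at least $d+1$ points; since $\gamma$ is {\crossi{d+1}}, it meets $h$ in exactly $d+1$ points, say at parameters $s_1<\cdots<s_{d+1}$ in $I$. The key structural fact I would extract first is a \emph{transversality} or \emph{sign-alternation} property at these crossings: because there are exactly $d+1$ intersections and no more, $\gamma$ must genuinely pass from one side of $h$ to the other at each $s_i$ (it cannot just touch $h$ and come back, since that would force extra intersections nearby when $h$ is perturbed, or it would already violate the count). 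More precisely, for a generic nearby hyperplane $h'$ parallel to $h$, the crossing pattern is preserved and the $\gamma(s_i)$ split into points strictly on alternating sides of $h'$. This is the analogue of the alternation argument already used in the proof of Lemma~\ref{f:act}.

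Next I would transfer this to the polygonal path. Fix small $\de>0$ so that tiny neighborhoods of $s_1,\dots,s_{d+1}$ in $I$ are disjoint, and so that $\gamma$ strictly crosses $h$ (or a perturbed $h'$ avoiding all relevant subspaces) within each such neighborhood. Take $\eps\ll\de$ and the corresponding $\eps$-sample $T(\eps)$: in each neighborhood of $s_i$ there are sample points $t$ with $\gamma(t)$ on both sides of $h'$, hence the polygonal path $\pi(\gamma,T(\eps))$ has an edge crossing $h'$ near each $s_i$. That gives $d+1$ crossings of $\pi$ with a hyperplane $h'$ not containing any edge of $\pi$ (the latter by the general-position construction of $T(\eps)$, possibly after one further small perturbation of $h'$). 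This contradicts the assumed convexity (= {\crossi d}) of $\pi(\gamma,T(\eps))$. One must be slightly careful that the finitely many sample points involved lie on the chosen side, which follows from continuity of $\gamma$ once $\eps$ is small enough relative to the moduli of the strict crossings; and one must ensure the perturbed $h'$ simultaneously avoids containing any of the (finitely many) edges of $\pi(\gamma,T(\eps))$ that matter, which is a codimension argument.

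The main obstacle I anticipate is the degenerate case where an intersection point $\gamma(s_i)$ lies on $h$ \emph{tangentially} in a way that is not a clean transversal crossing — e.g. $\gamma$ could touch $h$ at $\gamma(s_i)$ with the whole parameter interval near $s_i$ mapping to one side, or even $\gamma$ could be locally constant (ruled out early, since {\crossi{d+1}} curves are not constant on open intervals, as noted after the definition of {\crossi k}). The clean way around this is to argue that \emph{all} $d+1$ intersections are transversal: if some $\gamma(s_i)$ were a one-sided touch, then a suitably perturbed parallel hyperplane $h'$ would either lose that intersection but keep the others and gain a new one elsewhere (by an intermediate-value/continuity argument on the remaining $d$ transversal crossings plus the count), or would pick up two intersections near $s_i$, in either case producing $\ge d+1$ transversal crossings of a hyperplane with $\gamma$, which then transfers to $\pi$ as above. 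Making this perturbation argument rigorous — choosing $h'$ in the $1$-parameter family of parallels to $h$ and controlling exactly how the intersection multiset changes — is the delicate part; everything else (the $\eps$-sample bookkeeping, invoking Theorem~\ref{t:pmain}, the general-position genericity of $h'$) is routine.
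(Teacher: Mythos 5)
Your overall strategy coincides with the paper's: reduce to a hyperplane meeting $\gamma$ in $d+1$ points near each of which $\gamma$ passes from one side to the other, then for small $\eps$ pick an alternating subsequence of sample points from $T(\eps)$ near those crossings to exhibit $d+1$ intersections of $\pi(\gamma,T(\eps))$ with a generic hyperplane, contradicting convexity. Your second paragraph is essentially the paper's transfer argument with the intervals $J_i^{\pm}$.

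The gap is in the treatment of non-transversal intersections, and in fact your opening ``key structural fact'' is false as stated: having \emph{exactly} $d+1$ intersections with $h$ does not force them all to be genuine crossings; one-sided touches and endpoint intersections are perfectly compatible with the count. More seriously, your proposed fix --- perturb $h$ only within the \emph{parallel} family --- does not always produce $d+1$ generic crossings, and the ``gain a new intersection elsewhere'' mechanism you invoke does not exist: a small parallel shift changes nothing away from $h\cap\gamma$, doubles or kills each non-transversal intersection depending on the direction, and the doublings can exactly cancel the kills. Concretely, for $d=2$ take $\gamma(t)=\bigl(t,\;t(t-1)(t-\tfrac12)\bigr)$ on $[0,1]$: this is \crossi{3}, and the line $h=\{y=0\}$ meets it at $t=0,\tfrac12,1$, where both endpoints lie on $h$ with $\gamma$ locally on one side only (above near $t=0$, below near $t=1$); one checks that every parallel line $y=\eps$ meets this curve at most twice, so no parallel perturbation yields three transversal crossings. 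The paper therefore \emph{tilts} $h$: it classifies the $k$ non-generic intersections as top-touching, bottom-touching, or endpoints, notes that (because the $d+1$ intersections may be assumed to span $h$) some $k-1$ of the non-generic points are affinely independent, and moves $h$ slightly so that each of those $k-1$ points lands on a prescribed side of the new hyperplane $h'$; a short case analysis on $k$, with the cases $k\le 3$ treated separately, then yields $d+1$ generic crossings of $h'$. In the example above, a line $y=ax+b$ with $b>0>a+b$ places $\gamma(0)$ strictly below and $\gamma(1)$ strictly above, and recovers three transversal crossings. You correctly flagged this perturbation step as ``the delicate part,'' but the parallel-only resolution you sketch does not go through, and patching it requires essentially the paper's tilting argument.
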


\begin{proof} For contradiction, we suppose that there is a hyperplane
$h$ intersecting $\gamma$ in at least $d+1$ points.

First we observe that these points can be assumed to span $h$:
if their affine hull $F$ had dimension smaller than $d-1$,
then since $\gamma\not\subset F$, we could rotate $h$ around $F$
and thus get more than $d+1$ intersections.

Let us say that a point $\gamma(t)\in h$, $t\in I$, is a
\emph{generic intersection} with $h$
if for an arbitrarily small neighborhood $U$ of $t$,
$\gamma(U)$ intersects both of the open halfspaces bounded by $h$
(as usual, we count generic intersections with multiplicity,
so the generic intersection is actually determined by $t$).
We claim that there is a hyperplane $h'$ with at least $d+1$
generic intersections.

For easier description, let us imagine $h$ horizontal.
An intersection that is not generic is either an endpoint of
$\gamma$, or it is a point $p$ where $\gamma$ touches $h$,
with a sufficiently small open neighborhood of $p$ on $\gamma$
lying all strictly above $h$ or all strictly below it;
let us call such intersections \emph{top-touching}
or \emph{bottom-touching}.

Let $q_1,q_2,\ldots,q_k$ be the non-generic intersections
of $\gamma$ with $h$. At least $k-1$ of these are affinely independent,
say $q_1,\ldots,q_{k-1}$,
and thus we can make an arbitrarily small movement of $h$
so that a prescribed subset of $\{q_1,\ldots,q_{k-1}\}$
ends up below $h$ and its complement above~$h$. The previously generic
intersections remain generic, provided that the movement was sufficiently
small.

Now if $q_i$ was bottom-touching and it lies above $h$ after the move,
then it yields (at least) two generic intersections with $h$,
and similarly for top-touching. If $q_i$ is an endpoint, then
it yields at least one generic intersection, provided that
$h$ was moved in the right direction.

Hence by an appropriate move we can always get
at least $d+1-k+2(k-3)+2=d+k-3$ generic intersections,
which is at least $d+1$ for $k\ge 4$. So it remains to discuss
the cases $1\le k\le 3$.

For $k\le 2$, the non-generic intersections are distinct and
thus affinely independent, and so we can get $k$ new generic
intersections by a suitable move. For $k=3$, there are two
affinely independent non-generic intersections, at least one
of them top-touching or bottom-touching, and hence
we can also get $3$ new generic intersections by a suitable move.
Thus, we have obtained a hyperplane $h'$ with at least
$d+1$ generic intersections as required.

Let $t_1,\ldots,t_{d+1}\in I$, $t_1<\cdots<t_{d+1}$, be the parameter
values corresponding to these generic intersections with $h'$.
To finish the proof of the lemma, we fix a sufficiently small
$\eps>0$ and intervals $J_1^+,J_1^-,\ldots,J_{d+1}^+,
J_{d+1}^-\subset I$, each of length at least $\eps$, such that
$J_i^+$ and $J_i^-$ are in a small neighborhood of $t_i$
(and thus they lie left of $J_{i+1}^+\cup J_{i+1}^-$),
and $\gamma(J_i^+)$ lies above $h'$ and $\gamma(J_i^-)$ below it.

Suppose that $J_1^+$ precedes $J_1^-$, for example.
Then we choose points $u_0,u_1,\ldots,u_{d+2}\in T(\eps)$
with $u_0\in J_1^+$, $u_1\in J_1^-$, $u_2\in J_2^+$,
$u_3\in J_3^-$, $u_4\in J_4^+$, etc. Then the polygonal
line $\pi(\gamma,T(\eps))$ changes sides of $h'$ at least $d+1$ times,
and thus it has at least $d+1$ intersections with $h'$.
Since the position of $h'$ is generic, this shows that
$\pi(\gamma,T(\eps))$ is not convex---a contradiction proving the
lemma, and also concluding the proof of Theorem~\ref{t:main}.
\end{proof}

\section{The lower bound for order-type homogeneous subsequences}\label{s:OT}

\heading{Super-order type homogeneity. }
The following strengthening of order-type homogeneity was considered
in \cite{EMRS}: a point sequence $P=(p_1,p_2,\ldots,p_n)$ in $\R^d$ is
\emph{super-order type homogeneous} if, for every $k=1,2,\ldots,d$,
the projection of $P$ to the first $k$ coordinates is order-type
homogeneous (this includes the assumption that all of these projections
are in general position---let us abbreviate this by saying that $P$
is in \emph{super-general position}).

It is easily seen, e.g.,
 by Ramsey's theorem, that for every $d$ and $n$ there is $N$ such
that every $N$-point sequence in super-general position in $\R^d$
contains a super-order type homogeneous subsequence of length $n$.
Let us denote the corresponding Ramsey function by $\COT_d(n)$.

It was shown in \cite{EMRS} that $\COT_d(n)\ge \twr_d(n-d)$.
Thus, to prove Theorem~\ref{t:OT4}, the lower bound for $\OT_d$,
and having Theorem~\ref{t:pmain} at our disposal,
it suffices to verify the following.

\begin{lemma} For all $d\ge 2$,
$\OT_{d}(n)\ge \COT_{d}(\Omega(n))$.
\end{lemma}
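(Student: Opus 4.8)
The plan is to use Theorem~\ref{t:pmain} to show that order-type homogeneity and super-order type homogeneity differ only by a constant factor in the length, after which the lemma follows from the known bound $\COT_d(n)\ge\twr_d(n-d)$ of~\cite{EMRS}.

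Set $C=C(d):=\prod_{k=1}^{d-1}M(k)$, where $M(k)$ is the constant from Theorem~\ref{t:pmain} for $k\ge 2$, and $M(1)$ is any finite constant witnessing the (trivial) one-dimensional analogue of Theorem~\ref{t:pmain}: a \crossi{2} polygonal path in $\R$, i.e.\ one meeting each point at most twice, decomposes into a bounded number of monotone (that is, \crossi{1}) arcs; one may take $M(1)=3$. Put $m:=\lfloor n/C\rfloor$. It suffices to exhibit a sequence of $\COT_d(m)-1$ points in general position in $\R^d$ with no order-type homogeneous subsequence of length $n$, since this gives $\OT_d(n)>\COT_d(m)-1$, hence $\OT_d(n)\ge\COT_d(\lfloor n/C\rfloor)=\COT_d(\Omega(n))$. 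For the witness I would take an extremal sequence $P$ for $\COT_d(m)$, namely a $(\COT_d(m)-1)$-point sequence in super-general position (in particular in general position) with no super-order type homogeneous subsequence of length $m$. So the whole task reduces to showing that every order-type homogeneous subsequence $Q\subseteq P$ has $|Q|<n$; I would argue by contradiction, extracting from any order-type homogeneous $Q$ of length $n$ a super-order type homogeneous subsequence of length at least $n/C\ge m$.

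The engine of the extraction is the following observation: \emph{if a curve $\sigma$ in $\R^k$ is convex, i.e.\ \crossi{k}, then its orthogonal projection $\sigma'$ onto the first $k-1$ coordinates is a \crossi{k}, equivalently \crossi{(k-1)+1}, curve in $\R^{k-1}$} (and likewise for polygonal paths, where moreover the ``edge-containing'' exceptional hyperplanes on the two sides correspond to each other). Indeed, a hyperplane $h'\subset\R^{k-1}$ pulls back under the projection to a hyperplane $h\subset\R^k$ with $\sigma(t)\in h$ iff $\sigma'(t)\in h'$, and $h$ contains an edge of $\sigma$ iff $h'$ contains the image edge of $\sigma'$, so the $\le k$ bound on intersections transfers verbatim. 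I would then run a downward induction on the number of coordinates: maintain a subsequence $Q_j\subseteq Q$ for $j=d,d-1,\ldots,1$, whose projection to the first $i$ coordinates is order-type homogeneous for every $i\in\{j,\ldots,d\}$, starting from $Q_d:=Q$. Given $Q_j$, Lemma~\ref{f:act} says the path through the first-$j$-coordinate projection of $Q_j$ (in the order inherited from $Q$) is convex in $\R^j$; by the observation, its further projection to the first $j-1$ coordinates is a \crossi{(j-1)+1} polygonal path in $\R^{j-1}$, in general position (inherited from the super-general position of $P$), so by Theorem~\ref{t:pmain} (or the one-dimensional fact when $j=2$) it subdivides into at most $M(j-1)$ convex sub-paths; let $Q_{j-1}\subseteq Q_j$ be the subsequence of $Q$ corresponding to the sub-path with the most vertices. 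By Lemma~\ref{f:act} again the projection of $Q_{j-1}$ to the first $j-1$ coordinates is order-type homogeneous, the projections to the first $i$ coordinates for $i\ge j$ remain order-type homogeneous (being subsequences of those for $Q_j$), and $|Q_{j-1}|\ge|Q_j|/M(j-1)$. After $d-1$ steps, $Q_1$ is super-order type homogeneous and in super-general position (being a subsequence of $P$), with $|Q_1|\ge n/\prod_{k=1}^{d-1}M(k)=n/C\ge m$, contradicting the choice of $P$.

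I expect the only real content to be the observation above on projections of convex curves, together with the realization that it licenses this one-coordinate-at-a-time descent; the rest is bookkeeping. The points that need a little care are the behavior of the ``hyperplane contains an edge'' exception in the polygonal-path notion of \crossi{k} under projection (handled in the observation), and supplying the trivial one-dimensional instance of Theorem~\ref{t:pmain} used at the last descent step; there is also the routine fact, used throughout, that passing to subsequences and to coordinate projections preserves (super-)general position, which holds because $P$ was chosen in super-general position.
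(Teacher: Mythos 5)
Your proposal is correct and follows essentially the same route as the paper: the core is the same one-coordinate-at-a-time descent via Lemma~\ref{f:act} and Theorem~\ref{t:pmain}, losing a factor $M(j-1)$ at each projection step. The only difference is a superficial logical repackaging (you argue via an extremal witness for $\COT_d(m)$ and a contradiction, while the paper directly extracts a super-order type homogeneous subsequence of length $\Omega(n)$ from any $\OT_d(n)$-point sequence in super-general position), which changes nothing of substance.
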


\begin{proof} Given $n$, let us set $N=\OT_{d}(n)$, and consider
an $N$-point sequence in super-general position in $\R^{d}$.
By definition, it contains an $n$-point order-type homogeneous
subsequence $P_1$.

By Lemma~\ref{f:act}, the polygonal path given by $P_1$ is
convex, i.e., \crossi{d}, and hence its projection onto the first $d-1$
coordinates is {\crossi d} as well. So by the assumption,
it can be subdivided into at most $M(d-1)$
polygonal paths that are {\crossi{d-1}}.
One of them corresponds, by Lemma~\ref{f:act} again,
to a subsequence $P_2$ of $P_1$ of length at least $n/M(d-1)$
whose projection to the first $d-1$ coordinates is order-type
homogeneous.

Analogously we construct $P_3,\ldots,P_{d}$, where
$|P_i|\ge |P_{i-1}|/M(d-i+1)$ and the projections of $P_i$
to the first $k$ coordinates, for $k=d-i+1,d-i+2,\ldots,d$,
are order-type homogeneous. In particular, $P_{d}$ is
the desired super-order type homogeneous subsequence of
length~$\Omega(n)$.
\end{proof}

\subsection*{Acknowledgment}
We would like to thank Marek Eli\'a\v{s}, Edgardo Rold\'an-Pensado,
Zuzana Safernov\'a, and Pavel Valtr for inspiring discussion
at initial stages of this research, R.\,A.~Zalik for
answering our question about Chebyshev systems, and
F.~Santos, B.~Shapiro, and O.\,R.~Musin for useful comments and references.
The first and the second author were supported by the ERC
Advanced Research Grant no.~267165 (DISCONV), and the first author by the
Hungarian National Research Grant K~83767 as well.

\bibliographystyle{alpha}
\bibliography{cg,geom}

\end{document}

Sketch of proof of $c(1)=3$. It is a case analysis, at least one page long (my guess).

We assume $(a_1,a_2)$ is positive.

Step 1. if all $(a_i,a_{i+1})$ are positive. The you show that there can't be four blocks $B_1,B_2,B_3,B_4$

Step 2. When $(a_j,a_{j+1})$ is the first negative consecutive pair. Then $a_j$ is the last element of a positive block $B_i$,
and the first element of a negative block $B_{i+1}$. You prove that there is no $B_{i-1}$, in other words, $i=1$, or there is
a single block before $a_j$

Step 3. So $B_1$ positive, $B_2$ negative. Assuming $B_3$ negative gives a contradiction with the sign sequence at $b_3$. Thus $B_3$ is positive$

Step 4. Assuming $B_4$ is positive gives a contradiction with the sign sequence of $x_4$

Step 5. If $B_4$ is negative, then you face a contradiction assuming either $(b_2,b_4)$ positive or negative

\hfill $\Box$

Similarly,  $p{h+1}\in R$ implies that $p_{h+1} \in \aff Q_m$, and we have, by induction, that $p_h,\ldots,p_j \in \aff Q_m$.

we assume  that $R$ does contain consecutive elements:

let us assume $p_i,p_{i+1},\dots,p_j \in R$ but
$p_{i-1},p_{j+1} \notin R$, $i<j$.
Then $q$ is not on the subpath $p_{i-1}p_i\cdots p_{j+1}$.
First, for simplicity, let us suppose
that $p_i,\ldots,p_j$ is the only string of consecutive elements in $R$.
For $h=i,\dots,j$, let us replace each $p_h$ by a point
$q_h$ lying on the edge $p_{h-1}p_h$
and very close to $p_h$. Setting
$q_h=p_h$ for all $h \notin \{i,\ldots,j\}$,
we  let $R^*=\{q_h: p_h \in R\}$.

The previous argument shows that there is at most one sign change
in the sequence $\bigl(\sgn(\{q_h\}\cup R^*):h=1,2,\ldots,n,
q_h\not\in R^*\bigr)$. Taking the limit as $q_h \to p_h$
for $h=i,\ldots,j$ gives that there is at most one sign change in the sign sequence of $R$ as well,
and the general condition property implies that no entry equals zero.

The case when there are several strings of consecutive elements in $R$ follows similarly. For each such string
$p_i,p_{i+1},\dots,p_j \in R$ with $p_{i-1},p_{j+1} \notin R$, we define $q_i,\ldots,q_j$ in the same way as above,
except when $p_i=p_1$, in which case we choose $q_h$ from the edge $p_h,p_{h+1}$ very close to $p_h$.
We set $q_h=p_h$ if $p_h\in R$ is not in any of the consecutive strings in $R$. Then the argument is finished as before.
%Again, there is at most one sign change in the sequence $\bigl(\sgn(\{q_h\}\cup R^*):h=1,2,\ldots,n,
%q_h\not\in R^*\bigr)$. Taking the limit as $q_h \to p_h$